\def\Z{\mathbb{Z}}
\def\R{\mathbb{R}}
\def\<={\leq}
\def\>={\geq}
\theoremstyle{plain}
\newtheorem{theorem}{Theorem}[section]
\newtheorem{proposition}[theorem]{Proposition}
\theoremstyle{definition}
\newtheorem{definition}[theorem]{Definition}
\theoremstyle{remark}
\newtheorem*{remark}{Remark}
\newtheorem*{remarks}{Remarks}
\newcommand{\set}[1]{\{#1\}}
\newcommand{\bigset}[1]{\Big\{#1\Big\}}
\renewcommand{\pod}[1]{\allowbreak\mathchoice
  {\if@display \mkern 18mu\else \mkern 8mu\fi (#1)}
  {\if@display \mkern 18mu\else \mkern 8mu\fi (#1)}
  {\mkern4mu(#1)}
  {\mkern4mu(#1)}
}
\begin{document}

\title[]
      {Locally maximizing orbits for the non-standard generating function of convex billiards and applications
     }

\date{}
\author{Misha Bialy}
\address{School of Mathematical Sciences, Raymond and Beverly Sackler Faculty of Exact Sciences, Tel Aviv University,
Israel} 
\email{bialy@tauex.tau.ac.il}
\thanks{MB is partially supported by ISF grant 580/20,  D.T is supported by ISF grants 580/20, 667/18 and DFG grant MA-2565/7-1  within the Middle East Collaboration Program.}

\author{Daniel Tsodikovich}
\address{School of Mathematical Sciences, Raymond and Beverly Sackler Faculty of Exact Sciences, Tel Aviv University,
	Israel}
\email{tsodikovich@tauex.tau.ac.il}


\begin{abstract}
Given an exact symplectic map $T$ of a cylinder with a generating function $H$ satisfying the so-called negative twist condition, $H_{12}>0$, we study the locally maximizing orbits of $T$, that is, configurations which are local maxima of the action functional $\sum_n H(q_n,q_{n+1})$.
We provide a necessary and sufficient condition for a configuration to be locally maximizing.
Using it, we consider a situation where $T$ has two generating functions with respect to two different sets of symplectic coordinates.
We suggest a simple geometric condition which guarantees that the set of locally maximizing orbits with respect to both of these generating functions coincide.
As the main application we show that the two generating functions for planar Birkhoff billiards satisfy this geometric condition.
We apply it to get the following result: consider a centrally symmetric curve $\gamma$, for which the Birkhoff billiard map has a rotational invariant curve $\alpha$ of $4$-periodic orbits.
We prove that a certain $L^2$-distance between $\gamma$ and its ``best approximating" ellipse can be bounded from above in terms of the measure of the complement of the set filled by locally maximizing orbits lying between $\alpha$ and the boundary of the phase cylinder. 
Moreover, this estimate is sharp, giving an effective version of a recent result on 
Birkhoff conjecture for centrally symmetric curves \cite{bialy2020birkhoffporitsky}.
We also get a similar bound for arbitrary curves $\gamma$, that relates the measure of the complement of the set of locally maximizing orbits with the $L^2$-distance between $\gamma$ and its ``best approximating" circle.
\end{abstract}

\maketitle

\section{Introduction and the results}
\subsection{\bf Twist maps}
Twist maps of the cylinder arise in the study of various dynamical systems, including mathematical billiards. 
In the case of exact twist maps, there exists a generating function, which allows to investigate the orbits of a twist map with a variational approach \cite {AubryS1983TdFm, MatherJohnN1991Vcoo, Bangert1988MatherSF, MacKayR.S1985CKTa, MacKayRS1989CKtf}, see also \cite {SiburgKarlFriedrich2004Tpol} and \cite{arnaud2010green}.

Our motivation comes from the billiard map in strictly convex domains with $C^2$ smooth boundary.
This dynamical system can be described as a twist map with respect to two different generating functions, so that it is a negative twist map with respect to both.
 The first generating function is the length of the chord (used extensively by Birkhoff).
 The second, non-standard generating function which is related to the support function of the table, was invented in \cite{BialyMisha2017Abaa}.
Recently, this non-standard generating function was found to be useful in \cite{BialyMisha2020DRia, BialyMisha2017Abaa, BialyMisha2018Gbti}.

Our goal is to compare variational properties of the action functionals for these two generating functions, but we formulate the results for general twist maps.

Consider an exact symplectic twist map $T$ of the cylinder $\mathbb{A}=S^1\times\mathbb{R}$ with symplectic coordinates $(q,p)$.
Denote by $H(q,q')$ the generating function.
 We shall assume, non-traditionally, that the twist is negative, that is 
\[
H_{12} (q,q') >0.
\]  
For the function $H$ we define the variational principle as follows. For the configuration sequence $\{q_n\}$ we associate the formal sum \[\sum_n H(q_n,q_{n+1}).\]
We consider {\it locally maximizing} configurations, that is, those configurations which give {\it local maximum}
for the functional between any two end-points.
We shall call such configurations, \textit{m-configurations}, and the corresponding orbits on the phase cylinder $\mathbb{A}$, \textit{m-orbits}, see Section \ref{section:analysis} for the precise definitions.
We denote by $\mathcal M_H\subset\mathbb{A}$ the set swept by all m-orbits corresponding to the variational principle for the generating function $H$.

In our first result, we summarize the ideas of \cite{Bialy1993, Michael2012Hrfc, MacKayR.S1985CKTa} and prove the following characterization of m-orbits.

\begin{theorem}\label{thm:mconfigPosJacobi}
	Let $T:\mathbb{A}\to\mathbb{A}$ be an exact twist map with the generating function $H$, satisfying the twist condition $H_{12}>0$. Let $\set{q_n}$ be a configuration sequence corresponding to the orbit $\set{(q_n,p_n)}$.
	Then the orbit $\set{(q_n,p_n)}$ is an m-orbit if and only if there exists a positive Jacobi field along $\{q_n\}$.
\end{theorem}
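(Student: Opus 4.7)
Let me abbreviate $a_n := H_{22}(q_{n-1},q_n) + H_{11}(q_n,q_{n+1})$ and $b_n := H_{12}(q_n,q_{n+1})$, so $b_n > 0$ by the twist condition. For any finite window $[m,M]$ and any variation $\{\xi_n\}_{n=m}^M$ with $\xi_m = \xi_M = 0$, the second variation of the partial action $\sum_{n=m}^{M-1} H(q_n+\xi_n,q_{n+1}+\xi_{n+1})$ is the tridiagonal quadratic form
\[
Q_{m,M}(\xi) \;=\; \sum_{n=m+1}^{M-1} a_n \xi_n^2 \;+\; 2\sum_{n=m}^{M-1} b_n \xi_n \xi_{n+1},
\]
whose associated linear operator is the Jacobi operator $(LJ)_n = b_{n-1}J_{n-1} + a_n J_n + b_n J_{n+1}$. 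By definition, the orbit is an m-orbit precisely when $Q_{m,M}$ is negative definite on the subspace $\{\xi_m = \xi_M = 0\}$ for every finite $[m,M]$.

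\emph{Sufficiency.} Given a positive Jacobi field $\{J_n\}$, I would substitute the Jacobi identity $a_n = -(b_{n-1}J_{n-1}+b_n J_{n+1})/J_n$ into $Q_{m,M}$ and re-index the two resulting sums (using $\xi_m = \xi_M = 0$ to absorb boundary terms) to arrive at the discrete Picone-type identity
\[
Q_{m,M}(\xi) \;=\; -\sum_{n=m}^{M-1} b_n \left( \xi_n \sqrt{\tfrac{J_{n+1}}{J_n}} - \xi_{n+1} \sqrt{\tfrac{J_n}{J_{n+1}}}\right)^{\!2}.
\]
Since $b_n > 0$ and $J_n > 0$, this is manifestly $\leq 0$, and equality forces the ratio $\xi_n/J_n$ to be constant on $[m,M]$; together with $\xi_m = 0$ this yields $\xi \equiv 0$. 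Hence $Q_{m,M}$ is strictly negative definite on every window.

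\emph{Necessity.} The idea is to obtain the positive Jacobi field as a limit of one-sided solutions. For each large $N$, let $J^N$ be the Jacobi field determined by the initial data $J^N_{-N}=0$, $J^N_{-N+1}=1$. A discrete Sturm/oscillation argument for tridiagonal matrices with positive off-diagonal entries (this is where $b_n > 0$ enters decisively, as in \cite{MacKayR.S1985CKTa, Bialy1993, Michael2012Hrfc}) converts the strict negative definiteness of $Q_{-N,k}$ for all $-N < k \le N$ into the non-vanishing, and hence positivity, of $J^N_n$ on $(-N, N]$. Normalize $J^N_0 = 1$, which is possible since $J^N_0 > 0$. A diagonal-compactness argument (linear recursion with locally bounded coefficients) extracts a subsequence converging pointwise to a global Jacobi field $J$ with $J_0 = 1$ and $J_n \geq 0$ for all $n$. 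Strict positivity is then forced by the Jacobi equation itself: if $J_{n_0} = 0$, then $b_{n_0-1}J_{n_0-1} + b_{n_0}J_{n_0+1} = 0$ with positive coefficients and non-negative terms forces $J_{n_0\pm 1} = 0$, and two consecutive zeros propagate through the second-order recursion to $J \equiv 0$, contradicting $J_0 = 1$.

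\emph{Main obstacle.} The delicate step is the Sturm-type propagation in the necessity direction, i.e.\ translating negative definiteness of every finite Hessian into sign information for the one-sided Jacobi field $J^N$ on its whole window. In the standard positive-twist (minimizing) case this is a classical tool, but here one must carefully track the reversal of inequalities coming from the non-standard convention $H_{12}>0$ and from working with maxima rather than minima. The Picone identity and the compactness/propagation arguments are, by comparison, straightforward once the sign bookkeeping is set up correctly.
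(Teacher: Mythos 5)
Your plan is correct, and in the ``positive Jacobi field $\Rightarrow$ m-orbit'' direction it is genuinely different from the paper's argument. The paper proves that implication by combining the discrete Sturm separation theorem (to show the shooting solution with $\xi_0=0$, $\xi_1=1$ stays positive for $n\ge 1$) with the three-term recursion for the principal minors of $W_{1N}$ and the identity $\xi_{k+1}=(-1)^k M_k/(b_1\cdots b_k)$, concluding negative definiteness from the alternating signs of the minors. Your discrete Picone identity replaces all of that with a single completion of the square; re-indexing the cross terms and using $b_{n-1}J_{n-1}+a_nJ_n+b_nJ_{n+1}=0$ does reproduce $Q_{m,M}$, and the equality analysis ($\xi_n/J_n$ constant on the window, hence $\xi\equiv 0$ because $\xi_m=0$) is right. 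This buys a self-contained proof of \emph{strict} negative definiteness without invoking Sturm theory or Sylvester's criterion. In the converse direction the two arguments are essentially the same construction: the paper solves the boundary value problems $\xi^{0,N}_0=1$, $\xi^{0,N}_N=0$ and cites \cite[Lemma 3]{Bialy1993} for the existence and positivity of the limit, while you shoot from the left endpoint and renormalize; your closing observation that a zero of a nonnegative nontrivial Jacobi field is impossible, because $b_{n_0-1}J_{n_0-1}+b_{n_0}J_{n_0+1}=0$ with positive $b$'s forces two consecutive zeros and hence $J\equiv 0$, is precisely how strict positivity of the limit is obtained.

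The one place where your justification is not yet a proof is the compactness step in the necessity direction. A Jacobi field is determined by two consecutive values, so after normalizing $J^N_0=1$ everything is controlled by $r_N:=J^N_1>0$; ``linear recursion with locally bounded coefficients'' bounds $J^N_n$ in terms of $(1,r_N)$ for each fixed $n$, but gives no bound on $r_N$ itself, and without such a bound you cannot extract a convergent subsequence. The needed bound comes from positivity on the \emph{other} side of the normalization point: write $J^N=v+r_N u$ with $v_0=1$, $v_1=0$, $u_0=0$, $u_1=1$. The Jacobi equation at $n=0$ gives $b_{-1}u_{-1}=-b_0u_1<0$, so $u_{-1}<0$, and then the constraint $J^N_{-1}=v_{-1}+r_N u_{-1}>0$ forces $r_N<v_{-1}/(-u_{-1})$ for every $N\ge 2$, a bound independent of $N$. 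With this line inserted, the diagonal extraction, the nonnegativity of the limit, and your propagation-of-zeros argument close the proof.
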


Motivated by the example of convex billiards, we shall consider an exact symplectic map of the cylinder $\mathbb{A}$ which is a twist map with respect to two sets of symplectic coordinates $(q,p)$ and $(x,y)$, with generating functions $H,G$ respectively.
 It is natural to ask whether the sets $\mathcal{M}_H$ and $\mathcal{M}_G$ are the same.

Our method to compare the m-orbits with respect to the generating functions $H,G$ relies on the following geometric picture.
At every point $z\in\mathcal{M}_H$ one can partition the tangent space $T_z \mathbb{A} $ to four cones that are determined by the image and pre-image (by $T$) of the vertical direction $\frac{\partial}{\partial p}$, see Definition \ref{def:2211}, and Figure \ref{fig:coneCondition}.
The relation of these cones with variational properties of orbits was first studied in \cite{MacKayR.S1985CKTa}.

We denote by $N_H$ the ``north" cone.
Similarly, for every point $z\in\mathcal{M}_G$ one can partition the tangent space $T_z \mathbb{A}$ to four cones that are determined by the image and pre-image of the vertical direction $\frac{\partial}{\partial y}$, and we denote the ``north" cone by $N_G$.
We shall assume the following: 

{\bf Geometric assumption.} 

 	\begin{equation}\label{eq:assumption}
 	\begin{cases}
 		z\in \mathcal{M}_H \implies \frac{\partial}{\partial y}(z)\in N_H ,\\
 		z\in\mathcal{M}_G \implies \frac{\partial}{\partial p}(z)\in N_G .
 	\end{cases}
 \end{equation}
\begin{figure}
	\centering
	\begin{tikzpicture}
		
		\draw (-3,0)--(3,0);
		\draw(0,-3)--(0,3);
		\draw[dashed] (-2,-2)--(2,2);
		\draw[dashed] (-2,2)--(2,-2);
		\draw[->, line width = 2](0,0)--(0,1.5);
		
		\draw[blue] (-1,3)--(1,-3);
		\draw[->,blue, line width = 2](0,0)--(-0.5,1.5);
		
		\node at (0.5,3) {$N_H$};
		\node at (-0.5,-3) {$S_H$};
		\node at (-3,0.3) {$W_H$};
		\node at (3,0.3) {$E_H$};
		
		\node at (0.5,1.5) {$\frac{\partial}{\partial p}$};
		\node[blue] at (-0.3,2) {$\frac{\partial}{\partial y}$};
	\end{tikzpicture}
	\caption{Cone condition. The black solid lines represent the coordinate axes, and the dashed lines represent the lines that define the cones with respect to $(q,p)$ coordinates. The vertical direction is marked. The blue line is the vertical direction with respect  to $(x,y)$ coordinates. \label{fig:coneCondition}}
\end{figure}
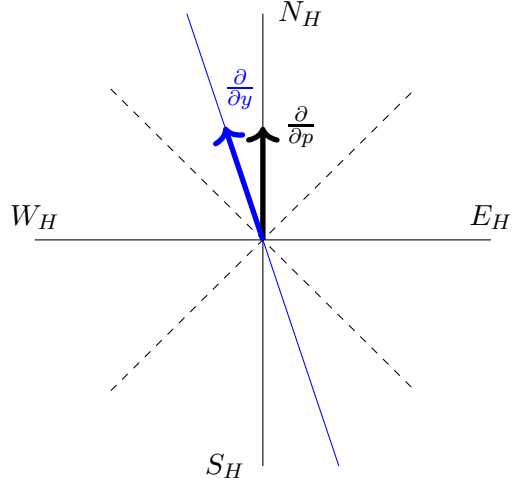
We prove the following:\begin{theorem}\label{thm:equalMOrbits}

	Let $T:\mathbb{A}\to\mathbb{A}$ be an exact twist map, with respect to two sets of symplectic coordinates, $(q,p)$ and $(x,y)$ and generating functions $H,G$ satisfying the twist condition $H_{12}, G_{12}>0$. 
	Assume the geometric assumption \eqref{eq:assumption} holds.
	Then the sets of m-orbits corresponding to the variational principles for $H$ and $G$ coincide:
\[\mathcal{M}_H=\mathcal{M}_G.\]
\end{theorem}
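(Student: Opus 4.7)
The plan is to establish $\mathcal{M}_H \subseteq \mathcal{M}_G$; the reverse inclusion follows from the second line of \eqref{eq:assumption} by an identical argument with the roles of $H$ and $G$ swapped. Fix $z \in \mathcal{M}_H$. By Theorem~\ref{thm:mconfigPosJacobi}, there is a positive Jacobi field along the $T$-orbit of $z$; viewed as a linearized orbit, it is a sequence of tangent vectors $w_n \in T_{z_n}\mathbb{A}$ with $w_{n+1} = dT(w_n)$ and $dq(w_n) > 0$ for every $n$. I would show that the same sequence automatically satisfies $dx(w_n) > 0$, so that it is simultaneously a positive Jacobi field for $G$; Theorem~\ref{thm:mconfigPosJacobi} applied to $G$ then yields $z \in \mathcal{M}_G$.

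Fix $n$ and work in $T_{z_n}\mathbb{A}$. Put $v^+ = dT(\partial/\partial p|_{z_{n-1}})$ and $v^- = dT^{-1}(\partial/\partial p|_{z_{n+1}})$, so $N_H$ is the open cone bounded by the lines $\mathbb{R}v^+$, $\mathbb{R}v^-$, on the side of $\partial/\partial p$. Linearizing the defining relations $p = -H_1(q,q')$, $p' = H_2(q,q')$ shows that the three positivity conditions $dq(w_{n-1})>0$, $dq(w_n)>0$, $dq(w_{n+1})>0$ confine $w_n$ to the open cone $E_H$ adjacent to $N_H$, sharing with it the ray $\mathbb{R}_{>0}v^-$ and otherwise bounded by the ray $\mathbb{R}_{>0}(-v^+)$. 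Since both $\partial/\partial p$ and, by the geometric assumption, $\partial/\partial y$ lie in $N_H$, the line $\mathbb{R}\partial/\partial y = \ker dx$ is contained in $N_H \cup S_H$ and hence disjoint from $E_H$. Connectedness of $E_H$ then forces it into exactly one of the two half-planes $\{dx>0\}$, $\{dx<0\}$.

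To pin down the correct one, write $\partial/\partial y = C\,\partial/\partial q + D\,\partial/\partial p$. The symplectic condition $dx \wedge dy = dq \wedge dp$, together with $dx(\partial/\partial y)=0$, forces $dx = D\,dq - C\,dp$. Substituting the explicit coordinate expressions for $v^\pm$ produced by the twist linearization (which involve only $H_{11}, H_{12}, H_{22}$) yields the identities
\[
 dx(v^-) = -\,dq\bigl(dT\,\partial/\partial y\bigr), \qquad dx(-v^+) = dq\bigl(dT^{-1}\,\partial/\partial y\bigr).
\]
The two right-hand sides are positive precisely by the two inequalities that encode $\partial/\partial y \in N_H$. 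By linearity of $dx$, positivity on both bounding rays propagates to all of $E_H$; in particular $dx(w_n) > 0$, as required.

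The main obstacle is bookkeeping. One must correctly identify the cone in which the positive Jacobi-field vectors actually live: it is \emph{not} the hypothesis cone $N_H$ (which contains $\partial/\partial p$ and in fact no positive $H$-Jacobi-field vector at all), but the adjacent east cone $E_H$ bounded by $+v^-$ and $-v^+$. Once this identification and the orientations of its bounding rays are fixed, the assumption $\partial/\partial y \in N_H$ translates, through the two sign identities above, directly into $dx > 0$ on $E_H$, which is exactly what is needed.
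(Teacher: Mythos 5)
Your argument is correct and follows essentially the same route as the paper: invoke Theorem \ref{thm:mconfigPosJacobi} to get a positive Jacobi field, observe that the corresponding $T$-invariant vectors lie in the cone $E_H$, and use the geometric assumption $\frac{\partial}{\partial y}\in N_H$ to conclude $\delta x_n>0$, hence positivity of the Jacobi field for $G$. The only (cosmetic) difference is in the last step, where the paper deduces $dx(w)>0$ directly from the fact that the pair $\bigl(w,\frac{\partial}{\partial y}\bigr)$ is positively oriented and symplectic coordinate changes preserve orientation, whereas you verify the sign of $dx$ on the boundary rays of $E_H$ and propagate it by linearity.
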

Theorem \ref{thm:equalMOrbits} is proved in Section \ref{subsection:twogenfun}.

Moreover we shall prove in  Theorem \ref{thm:equalityForBirkhoff} below that the two generating functions for the Birkhoff billiard map satisfy the geometric assumption \eqref{eq:assumption}, proving that the m-orbits of both those generating functions coincide.

\begin{remarks}
\begin{enumerate}
	\item Our proof of Theorem \ref{thm:equalMOrbits} is geometric and is based on the criterion of Theorem \ref{thm:mconfigPosJacobi}. We do not know if the Geometric assumption \eqref{eq:assumption} is really necessary for $\mathcal{M}_H=\mathcal{M}_G$ to hold.
\item One can prove that the result of Theorem \ref{thm:equalMOrbits} and its application to Birkhoff billiards remain true if we replace the class of m-orbits with a smaller class of globally maximizing orbits.

\end{enumerate}
\end{remarks}
\subsection{\bf Application to Convex billiards}

Following \cite{bialy2020birkhoffporitsky}  we shall use the non-standard generating function for convex billiards in order to study the invariant set occupied by m-orbits.

It was proved in  \cite{BialyMisha2015EbiE} that the measure of the complement of the set of m-orbits $\mathcal{M}$ can be estimated from below
in geometric terms (isoperimetric defect of the billiard curve).
Since the set of m-orbits with respect to both generating functions coincide, we can also use the non-standard generating function, to give another estimate for the measure of this set, in addition to the two provided in \cite{BialyMisha2015EbiE}.
\begin{theorem}\label{thm:effectiveGeneralCurve}
Let $\gamma$ be a planar strictly convex $C^2$ smooth curve with support function $h:[0,2\pi]\to\R$ (with respect to an arbitrary origin in the interior of $\gamma$).
Let $\mathbb{A}$ denote the phase cylinder of the Birkhoff billiard map in $\gamma$, $\mathcal{M}\subseteq\mathbb{A}$ denote the set swept by m-orbits, and $\Delta=\mathbb{A}\setminus\mathcal{M}$.
Then the following estimate holds true
\[\mu(\Delta)\geq \pi^2\beta d^2(h,W),\]
where $0<\beta$ is the minimal curvature of $\gamma$, $W$ denotes the subspace of $L^2[0,2\pi]$ spanned by the functions $\set{1,\cos(\psi),\sin(\psi)}$, and $d(\cdot,W)$ denotes the $L^2$- distance to that subspace.
Moreover, this bound is sharp for circles.
\end{theorem}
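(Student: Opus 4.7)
The strategy is to pass from the chord-length generating function $H$ to the non-standard generating function $G$ of \cite{BialyMisha2017Abaa}, apply the positive-Jacobi-field criterion of Theorem \ref{thm:mconfigPosJacobi}, and then convert the resulting pointwise estimate into an integral one that can be bounded by a Fourier/Wirtinger-type inequality for the support function $h$.

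Step 1. Invoke Theorem \ref{thm:equalityForBirkhoff} (proved later in the paper) to identify $\mathcal{M}_H$ with $\mathcal{M}_G$, so that we may work exclusively with $G$. In the coordinates $(\psi_1,\psi_2)$ given by the outer-normal angles of $\gamma$ at successive impact points, $G(\psi_1,\psi_2)$ is an explicit expression in $h(\psi_1),h(\psi_2),\psi_1-\psi_2$, and the second derivatives $G_{11},G_{22},G_{12}$ may be written in terms of $h$ and the radius of curvature $\rho(\psi)=h''(\psi)+h(\psi)$. The $T$-invariant area measure pulled back to these coordinates is $G_{12}\,d\psi_1\,d\psi_2$, and $\mu(\mathbb{A})$ equals an explicit integral involving $\rho$ over $\gamma$.

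Step 2. Theorem \ref{thm:mconfigPosJacobi} gives a positive Jacobi field $\{u_n\}$ along every orbit in $\mathcal{M}$. Setting $w_n=u_{n+1}/u_n>0$ and applying AM--GM to the resulting discrete Riccati relation produces the pointwise inequality
\[
-\bigl(G_{22}(\psi_{n-1},\psi_n)+G_{11}(\psi_n,\psi_{n+1})\bigr)\geq 2\sqrt{G_{12}(\psi_{n-1},\psi_n)\,G_{12}(\psi_n,\psi_{n+1})}
\]
on $\mathcal{M}$. Rewriting this in terms of $h$ converts it into an inequality relating $\rho$ to the deviation of $h$ from the subspace $W$. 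Integrating this inequality against $G_{12}\,d\psi_1\,d\psi_2$ over $\mathcal{M}$ and subtracting from the corresponding identity over $\mathbb{A}$ expresses $\mu(\Delta)=\mu(\mathbb{A})-\mu(\mathcal{M})$ as a nonnegative integral on $[0,2\pi]$ of a quadratic form in $h,h',h''$.

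Step 3. Expand $h=a_0+\sum_{n\geq 1}(a_n\cos n\psi+b_n\sin n\psi)$. Then the $W$-projection is $P_Wh=a_0+a_1\cos\psi+b_1\sin\psi$ and
\[
h-P_Wh=\sum_{n\geq 2}(a_n\cos n\psi+b_n\sin n\psi),\qquad d^2(h,W)=\pi\sum_{n\geq 2}(a_n^2+b_n^2).
\]
Because the operator $f\mapsto f''+f$ is Fourier-diagonal with eigenvalues $1-n^2$, the quadratic form produced in Step 2 is diagonal in this basis and vanishes identically on $W$ (so only the modes $n\geq 2$ contribute). Its spectral gap on $W^\perp$, combined with the curvature bound $\rho\geq\beta$ and the bookkeeping of numerical constants from the invariant measure, yields the claimed
\[
\mu(\Delta)\geq\pi^{2}\beta\,d^{2}(h,W).
\]
Sharpness for circles is immediate: then $h\in W$, so $d(h,W)=0$, and the circular billiard is integrable with $\mathcal{M}=\mathbb{A}$, giving $\mu(\Delta)=0$.

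The main technical obstacle is Step 2: rewriting the AM--GM Jacobi inequality as a transparent quadratic form in $h-P_Wh$ and carefully integrating it against the non-standard invariant measure. Tracking the Jacobians introduced by switching between the $H$- and $G$-coordinate systems is what pins down the numerical constant $\pi^2$; the rest of the argument is structurally forced by the Jacobi criterion and Plancherel's theorem on $W^\perp$.
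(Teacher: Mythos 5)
Your skeleton coincides with the paper's: pass to the non-standard generating function via Theorem \ref{thm:equalityForBirkhoff}, extract an inequality on $\mathcal{M}$ from the positive Jacobi field by AM--GM, integrate against the invariant measure, and finish with Parseval on $W^\perp$ (where $n^2(n^2-1)\geq 12$ for $|n|\geq 2$ supplies the gap) plus the trivial sharpness for circles. But the step you yourself flag as ``the main technical obstacle'' contains a genuine gap. The pointwise inequality you write, $-(G_{22}(\psi_{n-1},\psi_n)+G_{11}(\psi_n,\psi_{n+1}))\geq 2\sqrt{G_{12}(\psi_{n-1},\psi_n)\,G_{12}(\psi_n,\psi_{n+1})}$, does not follow from AM--GM applied to the Riccati relation $-a_n=b_{n-1}\tfrac{u_{n-1}}{u_n}+b_n\tfrac{u_{n+1}}{u_n}$ (you pick up an uncontrolled factor $\sqrt{u_{n-1}u_{n+1}}/u_n$), and even if granted, it does not integrate usefully: with $b(z)=G_{12}$ the right-hand side becomes $2\int_{\mathcal M}\sqrt{b(T^{-1}z)\,b(z)}\,d\mu$, and Cauchy--Schwarz bounds this \emph{above} by $2\int_{\mathcal M}b\,d\mu$, i.e.\ in the useless direction. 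The paper's device is to integrate the Riccati identity first, use $T$-invariance of $\mu$ and $\mathcal M$ to bring both terms to the same pair of points, and only then apply AM--GM; this is packaged as the coboundary function $\omega$ of Subsection \ref{subsection:omegaBounds}, whose telescoping gives $\int_{\mathcal M}(S_{11}+S_{22}+2S_{12})\,d\mu\leq 0$ with all derivatives at the same pair $(\varphi_0,\varphi_1)$, which by \eqref{eq:secondOrderS} collapses to $\int_{\mathcal M}2h''(\psi)\sin\delta\,d\mu\leq 0$. You need this (or an equivalent single-point reduction) before any Fourier analysis can start.

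Your account of where $\mu(\Delta)$ and $\beta$ enter is also off. The measure of $\Delta$ does not arise as ``a nonnegative integral of a quadratic form''; it arises from splitting $\int_{\mathbb A}h''\sin\delta\,d\mu\leq\int_{\Delta}h''\sin\delta\,d\mu$ and closing the right side with a sup-norm bound $\int_\Delta h''\sin\delta\,d\mu\leq\mu(\Delta)\max|h''|$, where $\max|h''|\leq\rho_{\max}+\max h\leq 1/\beta+D\leq 3/\beta$ via Blaschke's rolling disk theorem ($D\leq 2/\beta$). That is the sole source of the factor $\beta$ in the final constant; your invoked ``curvature bound $\rho\geq\beta$'' is not the relevant inequality (minimal curvature $\beta$ gives $\rho\leq 1/\beta$, not a lower bound), and no spectral-gap argument on $W^\perp$ involves $\beta$ at all. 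Without the $\omega$-telescoping and the Blaschke step, your outline cannot produce the stated constant $\pi^2\beta$, nor any explicit constant.
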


The method of the proof can also be adopted to give  an effective version of the rigidity of integrable billiards in centrally symmetric curves that was proven in \cite{bialy2020birkhoffporitsky} in the context of the Birkhoff conjecture. 
Let us denote by $\mathcal{C}$ the set of centrally symmetric, strictly convex, $C^2$ smooth curves, for which the billiard map has an invariant curve $\alpha$ with rotation number $1/4$ consisting of $4$-periodic orbits, as was considered in \cite{bialy2020birkhoffporitsky}.
The class $\mathcal{C}$ is rather big and can be fully characterized in terms of the support function $h$, see Section \ref{section:effectiveBounds} below.
We shall denote by $\mathcal{A}$ the domain bounded by $\alpha$ and the upper boundary of the cylinder, and by $\mathcal{M}$ the subset of $\mathbb{A}$ swept by m-orbits.
We give a lower bound for the measure of the invariant subset $\Delta:=\mathcal{A}\setminus\mathcal{M}$, which is free from m-orbits.
\begin{theorem}\label{thm:estimate-beta}
	Let $\gamma$ be a centrally symmetric, strictly convex, $C^2$ smooth curve for which the billiard map has an invariant curve $\alpha$ with rotation number $1/4$ consisting of $4$-periodic orbits.
	Denote by $h:[0,2\pi]\to\R$ the support function of $\gamma$, with respect to the center of symmetry.
	 Set $\Delta=\mathcal{A}\setminus\mathcal{M}$.
	Then the following estimate for the measure holds true:
	\[
	\mu(\Delta)\geq \frac{25\pi^2}{32}\beta^3 d^2(h^2,U),
	\] 
	where $0<\beta$ is the minimal curvature of $\gamma$,  $U$ is the subspace of $L^2[0,\pi]$ spanned by $\set{1,\cos(2\psi),\sin(2\psi)}$, and $d(\cdot,U)$ is the $L^2$- distance from this subspace.
	Moreover, this bound is sharp for ellipses.
\end{theorem}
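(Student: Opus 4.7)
The plan is to adapt the strategy behind Theorem \ref{thm:effectiveGeneralCurve} to the restricted region $\mathcal{A}$ bounded by the $4$-periodic invariant curve $\alpha$ and the upper boundary of $\mathbb{A}$, and to exploit the coincidence $\mathcal{M}_H=\mathcal{M}_G$ granted by Theorem \ref{thm:equalMOrbits} together with the (forthcoming) Theorem \ref{thm:equalityForBirkhoff}. This allows the entire variational computation to be carried out with the non-standard generating function $G$, whose action is expressed cleanly in terms of the support function $h$ and, crucially, whose m-orbits coincide with those of the chord-length generating function.

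First, I would recall from \cite{bialy2020birkhoffporitsky} that for a centrally symmetric $\gamma$ the existence of the caustic $\alpha$ of $4$-periodic orbits is equivalent to the identity $h(\psi)^2+h(\psi+\pi/2)^2\equiv\text{const}$. Since central symmetry gives $h(\psi+\pi)=h(\psi)$, it is natural to view $h^2\in L^2[0,\pi]$; the identity above is equivalent to $h^2\in U=\operatorname{span}\{1,\cos(2\psi),\sin(2\psi)\}$. Thus $d(h^2,U)$ is precisely the quantitative defect from the rigidity condition of being an ellipse centred at the centre of symmetry.

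Second, I would write $\mu(\Delta)=\mu(\mathcal{A})-\mu(\mathcal{A}\cap\mathcal{M})$ and bound the right-hand side from below by an $L^2$-type integral measuring the pointwise failure of the $4$-periodic caustic identity. The construction parallels the proof of Theorem \ref{thm:effectiveGeneralCurve}: express the symplectic measure on $\mathcal{A}\cap\mathcal{M}$ in $(\psi_1,\psi_2)$-coordinates adapted to $G$; expand the non-standard action around $\alpha$; symmetrize across the four reflection points of the $4$-periodic orbit to isolate the combination $h(\psi)^2+h(\psi+\pi/2)^2$; and subtract its mean value. The curvature factor $\beta^3$ arises from estimating three instances of the radius of curvature $\rho=h+h''\le \beta^{-1}$ that appear as Jacobians/weights in the non-standard setting. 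The numerical factor $25\pi^2/32$ results from combining a Wirtinger/Poincar\'e inequality on $[0,\pi]$ for functions orthogonal to $U$ (whose lowest admissible Fourier modes are $\cos(4\psi),\sin(4\psi)$) with the explicit combinatorial factor produced by the $4$-fold symmetrization.

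Finally, sharpness for ellipses is immediate: if $\gamma$ is an ellipse centred at the origin then $h^2\in U$ identically, the region $\mathcal{A}$ is fully foliated by confocal caustics of locally maximizing orbits, so $\Delta=\emptyset$ and both sides of the estimate vanish. The principal obstacle is the middle step: extracting from the non-standard action the pointwise integrand proportional to $(h^2-P_U h^2)^2$ with exactly the right constant. The sign and scale tracking around the $4$-periodic orbit is delicate, and it is precisely at this point that replacing $H$ by $G$ via Theorem \ref{thm:equalMOrbits} is decisive, since with the chord length $H$ the corresponding expansion does not organise itself around the clean Fourier-analytic condition $h^2\in U$.
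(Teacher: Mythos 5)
Your overall architecture matches the paper's: pass to the non-standard generating function via Theorem \ref{thm:equalityForBirkhoff}, derive an integral inequality over the invariant region, bound the error term on $\Delta$ by $C\beta^{-3}\mu(\Delta)$ using Blaschke's rolling disk theorem, and convert the main term into $d^2(h^2,U)$ by Parseval. But the step you yourself flag as the principal obstacle is genuinely missing, and the device the paper uses there is not the one you sketch. The paper does not expand the action around $\alpha$ or symmetrize over the four reflection points. Instead it takes the two identities \eqref{eq:omega1} for the function $\omega$ along an m-orbit, multiplies the first by $p_1^2$ and the second by $p_0^2$ (where $p_i$ is the momentum, i.e.\ the signed distance from the center of symmetry to the line), subtracts, and applies $p_0^2\nu_1+p_1^2\nu_1^{-1}\ge 2p_0p_1$ to obtain the pointwise inequality
\[
p_1^2\,\omega(Tz)-p_0^2\,\omega(z)\;\ge\;p_0^2S_{11}+2p_0p_1S_{12}+p_1^2S_{22}.
\]
Integrating over the invariant set kills the left side, and the explicit formulas \eqref{eq:secondOrderS} turn the right side into the integrand of \eqref{eq:integralInequalityExp}, whose integral over $\mathcal{A}$ reduces (Lemma 5.1 of \cite{bialy2020birkhoffporitsky}) to $\frac{\pi R^4}{1024}\int_0^\pi(\mu'')^2-4(\mu')^2\,d\psi$ with $\mu=1-2h^2/R^2$. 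It is precisely the quadratic weights $p_0^2,p_1^2$ that make the derivatives of $h^2$ (rather than of $h$, as in Theorem \ref{thm:effectiveGeneralCurve}) appear; without this choice of weights your symmetrization has no mechanism for producing the quantity $\int((h^2)'')^2-4((h^2)')^2$ that feeds the Fourier estimate.

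There are also two factual errors in your Fourier-analytic setup that would break the constant. First, the identity $h^2(\psi)+h^2(\psi+\pi/2)\equiv\mathrm{const}$ is \emph{not} equivalent to $h^2\in U$: it is equivalent to the spectrum of $h^2$ being contained in $\{0\}\cup(2+4\Z)$, which characterizes the whole class $\mathcal{C}$ (Proposition \ref{prop:sufficientCondition}); $h^2\in U$ is the much smaller subfamily of ellipses. Second, and consequently, after projecting $h^2$ away from $U$ the lowest surviving modes are $e^{\pm 6i\psi}$, not $\cos(4\psi),\sin(4\psi)$ --- the modes with $n\equiv 0\pmod 4$, $n\neq0$, are absent for every curve in $\mathcal{C}$. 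The constant $\frac{125\pi^2}{32}$ (hence $\frac{25\pi^2}{32}$ after dividing by the bound $|A|\le 5\beta^{-3}$) comes from $n^4-4n^2\ge 1000$ for $|n|\ge 6$; if the lowest admissible mode were $n=4$ one would only get $n^4-4n^2\ge 192$ and a constant roughly five times smaller. So the Wirtinger-type inequality must be taken over functions with spectrum in $2+4\Z$ minus the $n=\pm2$ modes, not merely over functions orthogonal to $U$. Your sharpness discussion for ellipses is fine and agrees with the paper.
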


\begin{remark}
	 The bounds of Theorems \ref{thm:effectiveGeneralCurve}, \ref{thm:estimate-beta} can be considered as effective versions of the results on Birkhoff conjecture \cite{Bialy1993, bialy2020birkhoffporitsky} by estimating closeness of $\gamma$ to the ``closest" circle (for arbitrary curves) or ellipse (for centrally symmetric curves) in terms of the measure of the set $\Delta$.
	 
	 It would be interesting to understand if there exists an effective version of the results of \cite{KaloshinVadim2018OtlB} on the Birkhoff conjecture.
	 \end{remark}

\section{Analysis of locally maximizing orbits}\label{section:analysis}
\subsection{Criterion for m-orbits}\label{subsection:criterion}
Consider an exact symplectic twist map $T$ of the cylinder $\mathbb{A}=S^1\times \mathbb{R}$ with the symplectic coordinates $(q,p)$.
Denote by $H(q,q')$ the generating function. We shall assume non-traditionally that the twist is negative, that is 
\[
H_{12} (q,q') >0.
\]
We consider the corresponding variational principle. For the sequence $\{q_n\}$ we associate the formal sum \[\sum H(q_n,q_{n+1}).\]
We call a sequence a \textit{configuration} if it is a local extreme point of this functional, which is equivalent to the fact that this sequence can be lifted to an orbit $\set{(q_n,p_n)}$ of $T$.
Corresponding to the sign of the twist we consider locally maximizing configurations, that is those configurations which give {\it local maximum}
for the functional between any two end-points. 
In particular the matrices of second variations, for any $M\leq N$ 
\[W_{MN}=
\begin{pmatrix}
a_{M} & b_{M} & 0\cdots&0 &0 \\
b_{M}& a_{M+1} &b_{M+1}\cdots&0&0  \\
\vdots  & \ddots  & \ddots & \ddots & \vdots \\
0 & \cdots&b_{N-2}&a_{N-1}&b_{N-1}  \\
0 & \cdots& 0&b_{N-1} & a_{N} 
\end{pmatrix},\]
are negative semi-definite.
Here $a_n=H_{22}(q_{n-1},q_n)+H_{11}(q_n,q_{n+1})$, and $b_n=H_{12}(q_n,q_{n+1})$, and $\set{q_n}$ denotes the configuration (here and below the subscripts 1,2 denote partial derivatives with respect to the first or second variable).
We shall call the corresponding orbits of $T$, \textit{ m-orbits}, and denote the subset of the cylinder consisting of all m-orbits by $\mathcal{M} $. Here are some elementary properties of $\mathcal{M}$:
\begin{proposition}\label{prop:propertyM}
\begin{enumerate}[(a)]
	\item\label{itm:containsGlobal} $\mathcal{M}$  contains all globally maximizing orbits, and in particular, all rotational invariant curves as well as cantor-tori.
	\item\label{itm:variationNegDef} If the matrix of second variation of some finite segment of a configuration $\set{q_n}$ is negative semi-definite, then the matrix of second variation of  any proper sub-segment is negative definite. In particular, a configuration $\{q_n\}$ is an m-configuration if and only if any finite segment of $\{q_n\}$ has negative definite second variation.
	\item\label{itm:invarianceClosed} $\mathcal{M}$ is a closed set invariant under $T$.
\end{enumerate}
	\end{proposition}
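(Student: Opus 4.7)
I would treat the three items in turn, with the essential work going into (b). For (a), a globally maximizing orbit is by definition one for which each finite action segment attains its global maximum subject to the endpoint constraints, so every Hessian $W_{MN}$ is automatically negative semi-definite (the necessary second-order condition at a maximum) and the orbit is an m-orbit. That rotational invariant curves and Mather (cantor-tori) sets consist of globally maximizing orbits I would cite directly from Aubry--Mather theory, transcribed from the standard positive-twist/minimizing setting to the negative-twist/maximizing convention used here (Birkhoff's graph theorem and the construction of minimal measures go through unchanged, only the sign convention flips).

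For (b), the key ingredient is the strict positivity $b_n=H_{12}(q_n,q_{n+1})>0$. Suppose $W_{MN}\le 0$ and let $[M',N']\subsetneq[M,N]$ be a proper sub-segment; assume for contradiction that $v\ne 0$ is a null vector of $W_{M'N'}$. Extend $v$ by zero to a vector $\tilde v$ indexed by $[M,N]$. The zero padding does not touch any entry of $W_{MN}$ which interacts with the support of $\tilde v$, and the diagonal/off-diagonal entries of $W_{M'N'}$ and $W_{MN}$ at positions inside $[M',N']$ are literally the same $a_k$ and $b_k$, so $\tilde v^{T}W_{MN}\tilde v = v^{T}W_{M'N'}v=0$; since $W_{MN}$ is negative semi-definite, a vanishing quadratic form forces $W_{MN}\tilde v=0$. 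Assume without loss of generality that $M'>M$, so $\tilde v$ vanishes on $[M,M'-1]$. Looking at row $M'-1$ of $W_{MN}\tilde v=0$, the zero padding kills every term except $b_{M'-1}\tilde v_{M'}$, which gives $\tilde v_{M'}=0$ since $b_{M'-1}>0$. Iterating at rows $M',M'+1,\ldots,N'$ propagates the zero through the support of $v$, contradicting $v\ne 0$. The equivalence in the second sentence of (b) is then immediate: if $\{q_n\}$ is an m-configuration then $W_{M-1,N+1}\le 0$ for any finite $[M,N]$, and the first part of (b) upgrades the proper sub-segment $W_{MN}$ to negative definite; the converse is the standard sufficient second-order condition, a critical point with strictly negative definite Hessian on every finite segment is a strict local maximum between any two endpoints.

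For (c), both assertions are formal. The entries $a_n,b_n$ of $W_{MN}$ are continuous functions of the configuration, and negative semi-definiteness of a symmetric matrix is preserved under pointwise limits, so the m-condition is closed in the product topology on orbits; this gives closedness of $\mathcal{M}$. $T$-invariance is immediate because the condition ``$W_{MN}\le 0$ for all $M\le N$'' is invariant under the index shift $n\mapsto n+1$ that realizes the action of $T$ on the phase cylinder.

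The main obstacle I anticipate is the propagation step in (b): although the algebra is short, one must bookkeep the boundary rows carefully (the sub-cases $M'-1=M$ versus $M'-1>M$, and symmetrically $N'+1=N$ versus $N'+1<N$ if the sub-segment is shrunk on the right) to make sure that the zero padding really annihilates every term except the one involving $b_{M'-1}\tilde v_{M'}$ (respectively $b_{N'}\tilde v_{N'}$) in the tridiagonal equation $W_{MN}\tilde v=0$.
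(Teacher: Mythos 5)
Your proposal is correct and consistent with the paper's treatment: the paper disposes of (a) as ``obvious'' plus a citation to Herman for invariant curves, cites MacKay--Meiss for (b), and gives the same limit-of-negative-semi-definite-matrices argument for (c). Your zero-padding and kernel-propagation argument for (b) --- resting on $b_n=H_{12}(q_n,q_{n+1})>0$ and the tridiagonal structure, with the boundary rows handled as you flag --- is the standard proof behind the cited reference and is carried out correctly, so you have simply supplied in full the details the paper outsources.
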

\begin{proof}
	Claim (\ref{itm:containsGlobal}) is obvious. The fact that any rotational invariant curve consists of m-orbits was proved, e.g., by M.Herman (see \cite{SEDP_1987-1988____A14_0,MacKayRS1989CKtf}).
	 
	 Claim (\ref{itm:variationNegDef}) was proved in \cite{MacKayRS1989CKtf}.
	 
Claim (\ref{itm:invarianceClosed}) follows from the fact that a converging sequence of negative semi-definite matrices converges to a negative semi-definite matrix.
By item (\ref{itm:variationNegDef}), this means that the matrix of second variation for every proper sub-segment of the limiting configuration is negative definite, and hence, this sub-segment is a local maximum.
	\end{proof}
A \textit{Jacobi field} along a configuration $\{q_n\}$ is a sequence $\{\delta q_n\}$ satisfying the
\textit{discrete Jacobi equation}:
\begin{equation}
\label{eq:Jacobi}
b_{n-1}\delta q_{n-1}+a_n
\delta q_n+b_{n}\delta q_{n+1}=0,
\end{equation}
where, as before,
\[a_n=H_{22}(q_{n-1},q_n)+H_{11}(q_n,q_{n+1}),\
b_n=H_{12}(q_n,q_{n+1}).\]

It is important that Jacobi fields $\{\delta q_n\}$  along $\{q_n\}$ are in 1-1 correspondence with $T$-invariant vector fields $\{(\delta q_n,\delta p_n)\} $ along the orbit $\set{(q_n,p_n)}$: any solution to the Jacobi equation $\set{\delta q_n}$ can be lifted to a $T$-invariant vector field $(\delta q_n,\delta p_n)$ where 
\[
\delta p_n=-H_{11}(q_n,q_{n+1})\delta q_n-H_{12}(q_n,q_{n+1})\delta q_{n+1}
,\]
or equivalently, due to the Jacobi equation:
\[
 \delta p_n=H_{22}(q_{n-1},q_{n})\delta q_n+H_{12}(q_{n-1},q_{n})\delta q_{n-1}.
\]
Conversely, if $\set{(\delta q_n, \delta p_n)}$ is a $T$-invariant vector field, then $\set{\delta q_n}$ is a solution to the Jacobi equation.
Now we are in position to prove Theorem \refeq{thm:mconfigPosJacobi}.

\begin{proof}[Proof of Theorem \ref{thm:mconfigPosJacobi}]
	1.($\Rightarrow$): Since Jacobi fields are solutions to \eqref{eq:Jacobi}, the space of Jacobi fields  along $\{q_n\}$ is two dimensional.
	 If $\{q_n\}$ is an m-configuration then by Proposition \ref{prop:propertyM} the matrix of second variation $W_{MN}$ is negative definite for all $M\leq N$.
	  In particular, any non-trivial Jacobi field $\{\xi_n\}$ can vanish not more than for one $n$, since otherwise one of the matrices $W_{MN}$ is singular.
	
	Therefore, for fixed $M<N\in\Z$, the following boundary value problem for the Jacobi field $\xi_n$, \[\begin{cases}\xi _{M}=1,\\ \xi_N=0,\end{cases}\] can be uniquely solved.
	 Denote this Jacobi field by $\{\xi^{M,N}_n\}$.
	   Moreover, it follows from \cite[Lemma 3]{Bialy1993} that there exists a strictly positive limiting Jacobi field:
	\[
	\nu_n=\lim\limits_{N\rightarrow +\infty}\xi^{0,N}_n, \nu_n>0, \forall n.
	\]
	2.($\Leftarrow$): Assume now that there exists a positive Jacobi field along a configuration $\{q_n\}$. 
	Then it follows from the discrete Sturm Separation Theorem \cite[Theorem 7.9]{ElaydiSaber1999Aitd} that any other Jacobi field along $\{q_n\}$ which vanishes at 
	$n=K$ keeps a constant sign for all $n<K$ and the opposite sign for all $n>K$.
	 We need to show that for all $M\leq N\in\Z$, the matrix $W_{MN}$ is negative definite.
	  We follow here the argument from \cite{MacKayR.S1985CKTa}.
	   Assume for simplicity that $M=1$.
	    For the principal minors $M_k$ of the matrix $W_{1N}$ we have the recursion formula
	\[	M_{k+1}=a_{k+1}M_k-b_k^2M_{k-1},	\]
	where by convention $M_0=1, M_{-1}=0$. 
	
	On the other hand consider the Jacobi field $\{\xi_n\}$ such that $\xi_{0}=0$, $\xi_{1}=1$.
	 The recursion formula for $\xi_n$ is given by \eqref{eq:Jacobi}.
	Then we have the formula
	\begin{equation}\label{eq:minors}
	\xi_{k+1}=(-1)^k\frac{M_k}{b_1b_2\cdots b_k}.
	\end{equation}
	Indeed, \eqref {eq:minors} holds true for $k=1$, and then can be verified by induction.
	It follows from \eqref{eq:minors} that the sign of $M_k$ equals $(-1)^k$ for all $k\geq 1$, since all $\xi_k$ are positive for	$k\geq 1$.
	 This proves negative definiteness of the matrix $W_{1N}$.
\end{proof}
	\subsection{Function $\omega$ and the bounds}\label{subsection:omegaBounds}
	Given a point $(q_0,p_0)\in\mathbb{A}$ such that its orbit is an m-orbit $\{(q_n,p_n)\}_{n\in\Z}$ we define, following \cite{Bialy1993}, a function \[\omega(q_0,p_0):=-H_{11}(q_0,q_{1})-H_{12}(q_0,q_{1})\nu_1 ,\]
	where $\nu_1=\lim\limits_{N\rightarrow +\infty}\xi^{0,N}_1 $, using the notation of the proof of Theorem \ref{thm:mconfigPosJacobi}. 
	The function $\omega$ is a measurable function on $\mathcal{M}$ as a limit of continuous functions.
	 Moreover, since $\nu_n$ is a positive Jacobi field along $\{q_n\}$ with $\nu_0=1$ we have 
	\[	\omega(q_0,p_0)=H_{22}(q_{-1},q_{0}) +H_{12}(q_{-1},q_{0})\nu_{-1}.\]
	
 Shifting by 1 in the last formula we can write:
	\begin{equation}\label{eq:omega1}
	\begin{cases}
	\omega(T(q_0,p_0))=H_{22}(q_0,q_1)+H_{12}(q_0,q_1)\nu_1(q_0,p_0)^{-1},\\
	\omega(q_0,p_0)=-H_{11}(q_0, q_1)-H_{12}(q_0,q_1)\nu_{1}(q_0,p_0).
	\end{cases}
	\end{equation} 
	Therefore by the twist condition we have the bounds
\begin{equation}\label{eq:bounds}
	H_{22}(q_{-1},q_{0})<\omega(q_0,p_0)<-H_{11}(q_0,q_{1}).
\end{equation}
	In particular, this means that if the inequality \[H_{22}(q_{-1},q_{0})<-H_{11}(q_0,q_{1})\] is violated, then the orbit of $(q_0,p_0)$ is not an m-orbit.

	Since we are concerned with m-orbits we shall look at the points $(q_0,p_0)$ where it holds that
	
\begin{equation}\label{eq:cone}
	H_{22}(q_{-1},q_{0})<-H_{11}(q_0,q_{1}).
\end{equation}
	
	\begin{definition}\label{def:2211}
		For every point $(q_0,p_0)$ satisfying \eqref{eq:cone} the tangent lines with the slopes $-H_{11}(q_0,q_{1})$ and $H_{22}(q_{-1},q_{0})$ divide the tangent plane $T_{(q_0,p_0)}\mathbb {A}$ into four cones which we denote by $N_H,W_H,S_H,E_H$ in the clockwise direction, where $N_H$ contains the vertical vector $\frac{\partial}{\partial p}$.
		See Figure \refeq{fig:coneCondition}.
	\end{definition}

	\subsection{Two generating functions.}\label{subsection:twogenfun}
In this subsection we prove Theorem \ref{thm:equalMOrbits}.
	Suppose that the map $T$ is a twist map with respect to two sets of symplectic coordinates $(q,p)$ and $(x,y)$, with the generating functions $H(q,q'), G(x,x')$.
	We assume that the twist condition $H_{12},G_{12}>0$, and the geometric assumption \eqref{eq:assumption} hold.


		Let us show, for example, the inclusion $\mathcal{M}_H\subseteq\mathcal{M}_G$.
		 Take any m-orbit in $\mathcal{M}_H$, and let $z$ be a point of that orbit.
		 Write $(q_0,p_0)$ for the $(q,p)$ coordinates of $z$.
		By Theorem \ref{thm:mconfigPosJacobi}, the corresponding m-configuration has a positive Jacobi field $\delta q_n$.
		 Hence there exists a non-vertical $T$-invariant vector field $(\delta q_n, \delta p_n)$ along the m-orbit, with $\delta q_n>0$. 
		  This inequality, together with the bounds \eqref{eq:bounds}, imply that this vector field lies in the cone $E_H$.
		  By the assumption \eqref{eq:assumption}, the vector $\frac{\partial}{\partial y}$ is in $N_H$.
		  This means that the vector pair $(\delta q_n,\delta p_n)$, $\frac{\partial}{\partial y}$ is oriented positively.
		    Denote the coordinates of the Jacobi field with respect to the coordinates $(x,y)$ by $(\delta x_n, \delta y_n)$.
		    Since $(q,p)$ and $(x,y)$ are two symplectic coordinates, it follows that the transition preserves orientation.
		    Hence the vectors $(\delta x_n, \delta y_n)$ and the vertical vector are also oriented positively, which means that $\delta x_n>0$.
		    The vector field $(\delta x_n, \delta y_n)$ is again $T$-invariant, and hence its projection $\set{\delta x_n}$ is a positive Jacobi field also for the generating function $G$.
		    Hence by the criterion the orbit of $z$ belongs to $\mathcal{M}_G$. 
		This completes the proof.
	
	\subsection{Application to Birkhoff billiards}\label{subsetcion:applyToBilliards}
Let $\gamma$ be a planar strictly convex $C^2$ smooth curve.
	The phase cylinder $\mathbb{A}$, of all oriented lines intersecting the billiard table $\gamma$, can be endowed with two sets of symplectic coordinates as follows:
	consider an oriented line incoming the billiard table at the point $\gamma(s)$ with the angle $\delta$, where $s$ is the arc-length parameter on $\gamma$. 
	Let $\varphi$ be the angle between the right unit normal to the line and the horizontal direction, and $p$ be the signed distance from the origin to the line, see Figure \ref{fig:parameterPhaseSpaceBirk}.
	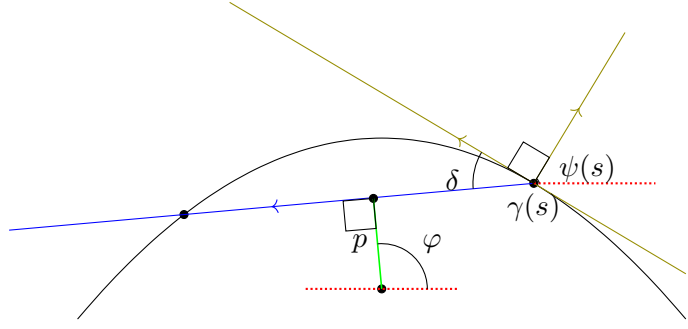
\begin{figure}
\begin{center}
\begin{tikzpicture}[scale = 2]
\draw[domain = -2:2, smooth, variable = \x, black] plot({\x},{1-0.3*\x*\x});
\tkzDefPoint(1,0.7){A};
\tkzDefPoint(-1.3,0.493){B};
\tkzDrawPoints(A,B);
\begin{scope}[decoration={
markings,
mark = at position 0.5 with {\arrow{>}}}
]
\draw[postaction = {decorate},blue](1,0.7)--(-1.3-0.5*2.3,0.493-0.5*0.207);
\draw[postaction = {decorate},olive](2,0.1)--(-1,1.9);
\draw[postaction = {decorate},olive](1,0.7)--(1.6,1.7);
\end{scope}
\tkzDefPoint(-1,1.9){C};
\tkzMarkAngle[size = 0.4, mark = none](C,A,B);
\node[left] at (0.58,0.72) {$\delta$};
\node[below] at(1,0.7) {$\gamma(s)$};
\draw[densely dotted,thick,red] (1,0.7) -- (1.8,0.7);
\tkzDefPoint(1.6,1.7){D}
\tkzDefPoint(1.8,0.7){E}
\tkzMarkAngle[size = 0.4, mark = none, arc = ll](E,A,D);
\node[right] at (1.1,0.8) {$\psi(s)$};
\tkzDefPoint(2,0.1){F}
\tkzMarkRightAngle[size = 0.2](D,A,C)
\tkzDefPoint(0,0){O};
\tkzDrawPoint(O);
\tkzDefPoint(-0.0909*0.6,1*0.6){P};
\tkzDrawPoint(P);
\tkzDrawSegment[green](O,P);
\tkzMarkRightAngle[size = 0.2](O,P,B);
\node[left] at (-0.0909*0.3,0.3) {$p$};
\draw[densely dotted, thick, red] (-0.5,0)--(0.5,0);
\tkzDefPoint(0.5,0){J};
\tkzMarkAngle[size = 0.3,mark = none](J,O,P);
\node[right] at (.2,0.3) {$\varphi$};
\end{tikzpicture}
\caption{Parametrization of the phase space of Birkhoff billiards, with respect to the two symplectic coordinates.}\label{fig:parameterPhaseSpaceBirk}
\end{center}
\end{figure}
	
The generating functions with respect to these two sets of symplectic coordinates are 

\[
L(s,s_1)=|\gamma(s_1)-\gamma(s)| \ \textrm{ and, } \ S(\varphi,\varphi_1)=2h(\psi)\sin\delta ,\]

	where $\psi:=\frac{\varphi+\varphi_1}{2}, \ \delta:=\frac{\varphi_1-\varphi}{2}$.
	\begin{theorem}\label{thm:equalityForBirkhoff}
	For planar Birkhoff billiards we have:
		$\mathcal{M}_L=\mathcal{M}_S$.
	\end{theorem}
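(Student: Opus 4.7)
The plan is to invoke Theorem \ref{thm:equalMOrbits} with $H = L$ and $G = S$. It then suffices to verify the geometric assumption \eqref{eq:assumption} for these two billiard generating functions. Denote by $r = \cos\delta$ the momentum conjugate to $s$ for $L$, so that $ds \wedge dr = d\varphi \wedge dp$ and both coordinate systems are symplectic on the same phase cylinder.

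First I would prove that on every $z \in \mathcal{M}_L$ the vertical direction $\partial/\partial p$ of the $(\varphi, p)$-system lies in $N_L$. The two coordinate systems are related by $\varphi = \psi(s) + \delta - \pi/2$ (with $\psi(s)$ the tangent angle of $\gamma$, so $\psi'(s) = \kappa(s)$) and $p = \gamma(s)\cdot(\cos\varphi, \sin\varphi)$. Implicit differentiation at fixed $\varphi$ yields $\partial s/\partial p = 1/\sin\delta_0$ and $\partial\delta/\partial p = -\kappa(s_0)/\sin\delta_0$, hence the vector $\partial/\partial p$ has slope $\kappa(s_0)\sin\delta_0$ in the $(s,r)$-tangent plane. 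Using the classical formulas
\[
-L_{11}(s_0, s_1) = \kappa_0\sin\delta_0 - \frac{\sin^2\delta_0}{\ell_{01}}, \qquad L_{22}(s_{-1}, s_0) = -\kappa_0\sin\delta_0 + \frac{\sin^2\delta_0}{\ell_{-1,0}},
\]
where $\ell_{ij} = |\gamma(s_j) - \gamma(s_i)|$, membership of $\partial/\partial p$ in $N_L$ (since $\partial s/\partial p > 0$ and $L_{22} < -L_{11}$ on $\mathcal{M}_L$) reduces to the single inequality $\kappa_0\sin\delta_0 > -L_{11}(s_0,s_1)$, i.e., $\sin^2\delta_0/\ell_{01} > 0$, which holds automatically from strict convexity of $\gamma$ and positive chord length.

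Second, I would verify the symmetric inclusion: on $z \in \mathcal{M}_S$ the vertical $\partial/\partial r$ of the $(s,r)$-system lies in $N_S$. Rotating the line about the footpoint $\gamma(s_0)$ at fixed $s_0$ gives $d\varphi/dr = -1/\sin\delta_0$ and $dp/d\varphi|_{s_0} = \gamma(s_0)\cdot(-\sin\varphi_0, \cos\varphi_0)$, which in support-function coordinates equals $-h(\psi_0)\sin(\varphi_0 - \psi_0) + h'(\psi_0)\cos(\varphi_0 - \psi_0)$, with $\psi_0$ the boundary-normal direction at $\gamma(s_0)$. Differentiating $S(\varphi_0, \varphi_1) = 2h(\psi)\sin\delta$ (with $\psi = (\varphi_0+\varphi_1)/2$, $\delta = (\varphi_1 - \varphi_0)/2$) gives explicit expressions for $-S_{11}$ and $S_{22}$ as combinations of $h, h'$ and $\rho = h + h''$. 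The cone inclusion again reduces to a positivity inequality ensured by $\rho > 0$ (strict convexity of $\gamma$) and the non-degeneracy of the chord.

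The main obstacle I foresee is the algebra in the second step: the expressions for $S_{11}, S_{22}$ mix $h, h'$ and $\rho$, so care is needed with the trigonometric identities relating $\psi_0, \varphi_0, \delta_0$ when reducing the cone inequality to a transparent positivity statement. Nevertheless, both halves rest on the same structural fact---strict convexity of $\gamma$ (equivalently $\kappa > 0$ or $\rho > 0$) together with positivity of the chord length---so the verification is elementary once the coordinate changes are explicitly written out.
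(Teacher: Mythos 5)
Your proposal is correct and follows essentially the same route as the paper: reduce to Theorem \ref{thm:equalMOrbits} and verify the geometric assumption \eqref{eq:assumption} by computing the differential of the symplectic change of variables $(s,\cos\delta)\mapsto(\varphi,p)$, obtaining exactly the paper's two key inequalities --- $\partial/\partial p$ has positive $s$-component and slope $k(s)\sin\delta>-L_{11}=k(s)\sin\delta-\sin^2\delta/L$, while $\partial/\partial(\cos\delta)$ has negative $\varphi$-component and slope $-h\sin\delta+h'\cos\delta=S_{22}-\tfrac12\rho\sin\delta<S_{22}$ --- both resting on strict convexity ($\rho>0$) and positivity of the chord. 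The only difference is organizational (implicit differentiation versus writing out and inverting the full Jacobian $DF$), and the second cone inclusion is left as a sketch, but the identified positivity statement is precisely the one the paper verifies.
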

	
\begin {proof}
The change of variables $(s,\cos\delta)_{\overrightarrow{F}} (\varphi, p)$ is given by the formulas
	\[
	\begin{cases}
	 \varphi=\psi+\delta,\\
	 p=h(\psi)\cos\delta+h'(\psi)\sin\delta,
	\end{cases}
	\] where $\psi=\int k(s) ds$, and $k$ is the curvature of $\gamma$. 
	Then the matrix of the differential  is:
	
	\[DF=
	\begin{pmatrix}
		k(s)& -\frac{1}{\sin\delta} \\
	k(s)(	h'(\psi)\cos\delta+h''(\psi)\sin\delta) & (	h(\psi)\sin\delta-h'(\psi)\cos\delta)\frac{1}{\sin\delta}
	\end{pmatrix}.
	\]
	We need to check the validity of the geometric assumption \eqref{eq:assumption}.
	Suppose $z$ is a point of an m-orbit for $S$.
	We compute at the tangent space to $z$:
	
\[
DF\left(\frac{\partial}{\partial (\cos\delta)}\right)=\begin{pmatrix}
a \\
b \\
\end{pmatrix}:=
\begin{pmatrix}
-\frac{1}{\sin\delta}\\
	(h(\psi)\sin\delta-h'(\psi)\cos\delta)\frac{1}{\sin\delta} \\
\end{pmatrix}.
\]
Recall the second order partial derivatives of $S$:
	\begin{equation}\label{eq:secondOrderS}
\begin{cases}	S_{11}(\varphi_0,\varphi_1)=\frac{1}{2}(h''(\psi)-h(\psi))\sin\delta-h'(\psi)\cos\delta,\\
	S_{22}(\varphi_0,\varphi_1)=\frac{1}{2}(h''(\psi)-h(\psi))\sin\delta+h'(\psi)\cos\delta,\\
	S_{12}(\varphi_0,\varphi_1)=\frac{1}{2}(h''(\psi)+h(\psi))\sin\delta.
	\end{cases}
\end{equation}
Write $\rho(\psi)=h(\psi)+h''(\psi)$ for the radius of curvature of $\gamma$ at the point the normal makes an angle of $\psi$ with the $x$-axis.
Then,
\begin{equation}\label{eq:ab}
\frac{b}{a}=-h(\psi)\sin\delta+h'(\psi)\cos\delta=S_{22}-\frac{1}{2}\rho(\psi)\sin(\delta)<S_{22},
\end{equation}
and in addition 
\begin{equation}\label{eq:b}
a=	-\frac{1}{\sin\delta}<0.
\end{equation}
Inequalities \eqref{eq:ab} and \eqref{eq:b} imply that the vector $\frac{\partial}{\partial (\cos\delta)}$ belongs to the cone $N_S$.
This is because inequality \eqref{eq:cone} implies that the line with slope $S_{22}$ is the line with the smaller slope, and hence the vector $(a,b)$ is above this line in the left half plane, so it is in $N_S$.

In the other direction, suppose $z$ is a point of an m-orbit of $L$, and let 
\[
DF^{-1}\left(\frac{\partial}{\partial p}\right)=:\begin{pmatrix}
a \\
b \\
\end{pmatrix}.
\]
Then we have
\begin{equation}\label{eq:DF}
DF  \begin{pmatrix}
a \\
b \\
\end{pmatrix}=  \begin{pmatrix}
0 \\
1 \\
\end{pmatrix}.
\end{equation}
Hence we get from the first row of \eqref{eq:DF}
\[ak(s)-\frac{1}{\sin \delta} b=0.\]
Therefore, 
\begin{equation}\label{eq:ab2}
\frac{b}{a}=k(s)\sin\delta.
\end{equation}
Now, if we use the explicit formulas for the second derivatives of $L$ (see \cite{Bialy1993}) we have
\begin{equation}\label{eq:ab3}
-L_{11}=k(s)\sin\delta-\frac{\sin^2 \delta}{L}<\frac{b}{a}.
\end{equation}
Moreover, we compute from the second row of \eqref{eq:DF}:
\[ak(s)(	h'(\psi)\cos\delta+h''(\psi)\sin\delta)+ (	h(\psi)\sin\delta-h'(\psi)\cos\delta)\frac{1}{\sin\delta}b=1.\]
Using \eqref{eq:ab2} we get 
\begin{equation}\label{eq:b2}
a=\frac{1}{\sin\delta}>0.
\end{equation}
It then follows from \eqref {eq:ab3}, \eqref {eq:b2}  that the vector $\left(\frac{\partial}{\partial p}\right)$  belongs to the cone  $N_L$.
This again follows from the fact that $z$ is a point of an m-orbit, and from inequality \eqref{eq:cone}, the line with slope $-L_{11}$ has the larger slope of the two lines that determine the cone.
\end{proof}

\section{Effective bounds for integrability of billiards and $L^2$ norm}\label{section:effectiveBounds}
Our goal in this section is to give effective bounds, similar to those given in \cite{BialyMisha2015EbiE} for the rigidity of integrable Birkhoff billiards.
We prove Theorems \ref{thm:effectiveGeneralCurve}, \ref{thm:estimate-beta}.
The bound of Theorem \ref{thm:effectiveGeneralCurve} is  for arbitrary planar strictly convex $C^2$ smooth curves $\gamma$.
We derive another effective bound for the rigidity of billiards in such curves, in addition to the two derived in \cite{BialyMisha2015EbiE} (see inequalities (1.1) and (1.2) there).
\begin{proof}[Proof of Theorem \ref{thm:effectiveGeneralCurve}]
As explained in Subsection \ref{subsection:omegaBounds}, one can construct a function $\omega$ such that along any m-orbit, equation \eqref{eq:omega1} holds true. 
By Theorem \ref{thm:equalityForBirkhoff}, the m-orbits for both the generating functions for Birkhoff billiards are the same, so we can choose to work with the generating function in the $(\varphi,p)$ coordinates.
If $(\varphi_0,p_0)$ are the coordinates of a line that is a part of an m-orbit, then there exist a function $\omega$ and a positive function $\nu_1$ such that \eqref{eq:omega1} holds:
\[	\begin{cases}
	\omega(T(\varphi_0,p_0))=S_{22}(\varphi_0,\varphi_1)+S_{12}(\varphi_0,\varphi_1)\nu_1(\varphi_0,p_0)^{-1},\\
	\omega(\varphi_0,p_0)=-S_{11}(\varphi_0, \varphi_1)-S_{12}(\varphi_0,\varphi_1)\nu_{1}(\varphi_0,p_0).
	\end{cases}\]
	By subtracting we get
	\begin{gather*}\omega(T(\varphi_0,p_0))-\omega(\varphi_0,p_0)=S_{11}(\varphi_0,\varphi_1)+S_{22}(\varphi_0,\varphi_1)+ \\
+S_{12}(\varphi_0,\varphi_1)\Big(\nu_1(\varphi_0,p_0)+\nu_1(\varphi_0,p_0)^{-1}\Big)	\geq \\
\geq S_{11}(\varphi_0,\varphi_1)+S_{22}(\varphi_0,\varphi_1)+2S_{12}(\varphi_0,\varphi_1).
	\end{gather*}
	where we used the fact that $S_{12}$ and $\nu_1$ are positive.
	Now we integrate both sides on the $T$ invariant set $\mathcal{M}$ with respect to the $T$ invariant measure
	\[d\mu=\frac{1}{4}(h(\psi)+h''(\psi))\sin\delta d\psi d\delta.\]
	The integral of the left hand side vanishes, and we are left with
	\[\int\limits_{\mathcal{M}}(S_{11}+S_{22}+2S_{12})d\mu \leq 0.\]
Using \eqref{eq:secondOrderS} and simplifying, we get
\[\int\limits_{\mathcal{M}} 2h''(\psi)\sin\delta d\mu \leq 0.\]
Since $\mathcal{M}=\mathbb{A}\setminus\Delta$, after dividing by $2$, we get
\begin{equation}\label{eq:IntInequalityForRigidity}
\int\limits_{\mathbb{A}}h''(\psi)\sin\delta d\mu \leq \int\limits_{\Delta} h''(\psi)\sin\delta d\mu.
\end{equation}
We give an upper bound for the right hand side, and a lower bound on the left hand side, and together we get the required bound.
For the right hand side, write:
\begin{gather*}
\int\limits_{\Delta} h''(\psi)\sin\delta d\mu \leq \Big| \int\limits_{\Delta} h''(\psi)\sin\delta d\mu \Big|\leq \\
\leq \int\limits_{\Delta}|h''(\psi)\sin\delta| d\mu \leq \mu(\Delta)\max\limits_{\Delta} |h''|.
\end{gather*}
It holds that $h(\psi)+h''(\psi)=\rho(\psi)$, where $\rho(\psi)$ is the radius of curvature at the point where the normal to $\gamma$ makes an angle of $\psi$ with the $x$-axis, so \[|h''|\leq\rho+\max h.\]
Since $h(\psi)+h(\psi+\pi)$ is the width of $\gamma$ in the direction $\psi$, and the maximal width is the diameter of $\gamma$, we have $\max h\leq D$, where $D$ is the diameter.
Also, the maximal radius of curvature of $\gamma$ is $\frac{1}{\beta}$ where $\beta$ is the minimal curvature of $\gamma$.
This gives us the estimate
\begin{equation}\label{eq:RHSEffectiveBound}
\int\limits_{\Delta} h''(\psi)\sin\delta d\mu \leq \Big(D+\frac{1}{\beta}\Big)\mu(\Delta)\leq \frac{3}{\beta}\mu(\Delta),
\end{equation}
where we used Blaschke's rolling disk theorem, stating that $\gamma$ is contained inside a disk with radius equal to the maximal radius of curvature of $\gamma$, and this means that $D\leq\frac{2}{\beta}$.

Now we turn to the left hand side.
\begin{gather*}
\int\limits_{\mathbb{A}}h''(\psi)\sin\delta d\mu=\frac{1}{4}\int\limits_0^{2\pi}\int\limits_0^\pi h''(\psi)(h''(\psi)+h(\psi))\sin^2\delta d\psi d\delta =\\
=\frac{1}{4}\int\limits_0^\pi \sin^2\delta d\delta\int\limits_0^{2\pi} h''(\psi)(h''(\psi)+h(\psi))d\psi=\frac{\pi}{8}\int\limits_0^{2\pi}(h''(\psi))^2+h''(\psi)h(\psi) d\psi.
\end{gather*}
From integration by parts it follows that
\[\int\limits_0^{2\pi}h(\psi)h''(\psi)d\psi=-\int\limits_0^{2\pi}(h'(\psi))^2 d\psi,\]
and as a result
\[\int\limits_{\mathbb{A}}h''(\psi)\sin\delta d\mu=\frac{\pi}{8}\int\limits_0^{2\pi}(h''(\psi))^2-(h'(\psi))^2 d\psi.\]
Write the Fourier expansion of $h$, $h(\psi)=\sum\limits_{n\in\Z} c_n e^{in\psi}$.
We have $h'(\psi)=\sum\limits_{n\in\Z} inc_n e^{in\psi}$ and $h''(\psi)=\sum\limits_{n\in\Z} -n^2 c_n e^{in\psi}$.
We now use Parseval's identity.
\begin{gather*}
\int\limits_0^{2\pi}(h''(\psi))^2-(h'(\psi))^2d\psi=2\pi\sum\limits_{n\in\Z} n^4 |c_n|^2-n^2 |c_n|^2=2\pi\sum\limits_{n\in\Z} n^2(n^2-1)|c_n|^2 = \\
= 2\pi\sum\limits_{|n|\geq 2} n^2(n^2-1)|c_n|^2 \geq 2\pi\sum\limits_{|n|\geq 2} 12 |c_n|^2 = \\
=12\int\limits_0^{2\pi}|h(\psi)-c_0-c_1e^{i\psi}-c_{-1}e^{-i\psi}|^2 d\psi =24\pi d^2(h,W).
\end{gather*}
Here $W$ is the subspace of $L^2[0,2\pi]$ spanned by $\set{1,\cos(\psi),\sin(\psi)}$, and $d(\cdot,W)$ denotes the $L^2$-distance to that subspace.
We now have the following estimate
\begin{equation}\label{eq:LHSLowerBoundEffective}
\int\limits_{\mathbb{A}}h''(\psi)\sin\delta d\mu \geq 3\pi^2 d^2(h,W).
\end{equation}
Now we can combine \eqref{eq:IntInequalityForRigidity}, \eqref{eq:RHSEffectiveBound} and \eqref{eq:LHSLowerBoundEffective}, to get
\[3\pi^2 d^2(h,W)\leq \frac{3}{\beta}\mu(\Delta).\]
This gives us the required inequality
\[\mu(\Delta)\geq\pi^2\beta d^2(h,W).\]
This bound is indeed sharp for circles:
if $\mu(\Delta)=0$ then $h\in W$, which means that there exist constants $a,b,c$ for which $h(\psi)=a+b\cos\psi+c\sin\psi$, and this is the support function of a circle (with center $(b,c)$ and radius $a$).
\end{proof}
The result we obtained is very similar to the bound (1.2) of \cite{BialyMisha2015EbiE}. 
The main difference is that the isoperimetric defect that appears there is replaced here with the $L^2$ distance from $h$ to the subspace $W$.
It is interesting if these bounds are actually comparable in some way.

We now turn to Theorem \ref{thm:estimate-beta}.
Let us denote by $\mathcal{C}$ the set of centrally symmetric, strictly convex, $C^2$ smooth curves, for which the billiard map has an invariant curve of $4$-periodic orbits, as was considered in \cite{bialy2020birkhoffporitsky}.
For a centrally symmetric curve $\gamma$, let $h:[0,2\pi]\to\R$ denote its support function (with respect to the center of symmetry).
It was shown in \cite{bialy2020birkhoffporitsky} that 
\[\mathcal{C}\subseteq \bigset{\gamma\mid h^2(\psi)=c_0+\sum\limits_{n\in 2+4\Z} c_n e^{in\psi}, h + h''>0}.
\]
The condition $h+h''>0$ implies that $\gamma$ is strictly convex, since $h(\psi)+h''(\psi)$ is the radius of curvature of $\gamma$, at the point where the normal to $\gamma$ makes an angle $\psi$ with the horizontal direction.
First we show that the converse inclusion also holds.
This means that the class of curves $\mathcal{C}$ is a rather ``large" set in $L^2[0,2\pi]$.
\begin{proposition}\label{prop:sufficientCondition}
It holds that \begin{equation}\label{eq:classOfCurves}
\mathcal{C}=\bigset{\gamma\mid h^2(\psi)=c_0+\sum\limits_{n\in 2+4\Z} c_n e^{in\psi}, h+h'' > 0}.
\end{equation}
More precisely, given a $C^2$ function $h:[0,2\pi]\to\R$ satisfying both conditions in the definition of the class $\mathcal{C}$, there exists a curve $\gamma\in\mathcal{C}$ for which the support function is $h$.
\end{proposition}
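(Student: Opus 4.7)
The inclusion ``$\subseteq$'' is exactly the content of \cite{bialy2020birkhoffporitsky}; what requires proof is the reverse inclusion. Given an admissible $h$, I let $\gamma$ be the unique centrally symmetric, strictly convex $C^{2}$ curve with support function $h$ (strict convexity is guaranteed by $h+h''>0$). The plan is to exhibit an explicit $1$-parameter family of $4$-periodic billiard orbits on $\gamma$, obtained by a symmetry reduction, and then to show that these orbits glue into a rotational invariant curve of rotation number $1/4$.

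The central symmetry of $\gamma$ induces on the phase cylinder the involution $\iota(\varphi,p)=(\varphi+\pi,p)$, which commutes with the billiard map $T$. A $4$-periodic orbit that is $\iota$-invariant as a set must satisfy $T^{2}=\iota$ on the orbit, i.e.\ $\varphi_{3}=\varphi_{1}+\pi$ and $\varphi_{4}=\varphi_{2}+\pi$. On this symmetric submanifold, the $\pi$-periodicity of $h$ reduces the $4$-periodic action $\sum_{i=1}^{4}S(\varphi_{i},\varphi_{i+1})$ to $2S(\varphi_{1},\varphi_{2})+2S(\varphi_{2},\varphi_{1}+\pi)$; inserting $S=2h(\psi)\sin\delta$ and passing to the coordinates $\psi=(\varphi_{1}+\varphi_{2})/2$, $\delta=(\varphi_{2}-\varphi_{1})/2$ gives the reduced action
\[
A(\psi,\delta)=4h(\psi)\sin\delta+4h\!\left(\psi+\tfrac{\pi}{2}\right)\cos\delta.
\]
By the principle of symmetric criticality, any critical point of $A$ is a critical point of the full $4$-periodic action, hence a genuine $4$-periodic orbit of $T$.

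Solving the Euler--Lagrange equations, $\partial_{\delta}A=0$ determines $\delta(\psi)\in(0,\pi/2)$ uniquely by $\tan\delta(\psi)=h(\psi)/h(\psi+\pi/2)$, and substituting this into $\partial_{\psi}A=0$ and multiplying by $h(\psi+\pi/2)$ reduces the second equation to
\[
(h^{2})'(\psi)+(h^{2})'\!\left(\psi+\tfrac{\pi}{2}\right)=0,
\]
which, after one integration, is equivalent to $h^{2}(\psi)+h^{2}(\psi+\pi/2)=\text{const}$. A short Fourier computation shows that for a $\pi$-periodic $h^{2}$ this constancy is exactly equivalent to the expansion $h^{2}=c_{0}+\sum_{n\in 2+4\Z}c_{n}e^{in\psi}$ assumed on $h$. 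Thus, under our hypothesis, the critical-point equations are solvable for every $\psi\in\R$, producing the desired $1$-parameter family of centrally symmetric $4$-periodic orbits.

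It remains to assemble these orbits into a rotational invariant curve $\alpha$. The orbit associated with $\psi$ consists of the four lines with $\varphi$-coordinates $\psi\pm\delta(\psi)$ and $\psi+\pi\pm\delta(\psi)$. From the defining relation and $\pi$-periodicity of $h$ one reads off $\delta(\psi+\pi/2)=\pi/2-\delta(\psi)$, which shows that the shift $\psi\mapsto\psi+\pi/2$ cyclically permutes these four branches. Hence as $\psi$ traverses $[0,2\pi)$ the four branches glue into a single continuous closed loop whose $\varphi$-coordinate increases by $2\pi$, i.e.\ a non-contractible loop in $\mathbb{A}$ which is $T$-invariant and filled by $4$-periodic orbits. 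The main technical obstacle, which I would settle by the implicit function theorem, is to confirm that this loop is a simple smooth graph over the $\varphi$-circle; this amounts to the monotonicity $1-\delta'(\psi)>0$, which should follow from differentiating $\tan\delta=h/h(\cdot+\pi/2)$ and using $h+h''>0$ to control $\delta'$. Granted this last step, $\alpha$ is the desired rotational invariant curve of rotation number $1/4$, so $\gamma\in\mathcal{C}$, establishing the reverse inclusion.
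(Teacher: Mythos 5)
Your argument is correct, but it reaches the conclusion by a genuinely different route than the paper. The paper's proof is a direct geometric verification: from the Fourier hypothesis it extracts the identity $h^2(\psi)+h^2(\psi+\tfrac{\pi}{2})=R^2$, defines $d(\psi)$ by $h(\psi)=R\sin d(\psi)$, and then checks by an explicit vector computation that the chord $\gamma(\psi)\gamma(\psi+\tfrac{\pi}{2})$ makes the angles $d(\psi)$ and $d(\psi+\tfrac{\pi}{2})=\tfrac{\pi}{2}-d(\psi)$ with the respective tangents, so that the reflection law holds at every vertex of the inscribed quadrilateral. You instead run the variational principle backwards: you impose the symmetry ansatz $T^2=\iota$, reduce the $4$-periodic action to $A(\psi,\delta)=4h(\psi)\sin\delta+4h(\psi+\tfrac{\pi}{2})\cos\delta$, and observe that $\partial_\delta A=0$ forces $\tan\delta=h(\psi)/h(\psi+\tfrac{\pi}{2})$ (the same angle as the paper's $d(\psi)$), while $\partial_\psi A=0$ is then precisely the constancy of $h^2(\psi)+h^2(\psi+\tfrac{\pi}{2})$, i.e.\ the Fourier condition. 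This is a legitimate use of symmetric criticality (for this finite group action one can verify directly that the two components of $\nabla A$ reproduce the four critical-point equations of the full action in pairs), and it has the advantage of \emph{explaining} where the condition $n\in 2+4\Z$ comes from, rather than verifying it; the paper's computation is more elementary but less illuminating. Your one flagged gap --- the monotonicity $1-\delta'(\psi)>0$ needed to make the four branches a rotational graph --- closes immediately: since $\delta'(\psi)=h'(\psi)/h(\psi+\tfrac{\pi}{2})$ and
\[
h'(\psi)=\Big\langle \gamma(\psi),\begin{pmatrix}-\sin\psi\\ \cos\psi\end{pmatrix}\Big\rangle
< \max_{x\in\gamma}\Big\langle x,\begin{pmatrix}-\sin\psi\\ \cos\psi\end{pmatrix}\Big\rangle = h\big(\psi+\tfrac{\pi}{2}\big),
\]
with strict inequality because $h+h''>0$ makes $\psi\mapsto\gamma(\psi)$ injective, one gets $\delta'<1$. (The paper does not address this graph property explicitly either, so you are not behind it on rigor.) Two small points to make explicit: $h$ itself is $\pi$-periodic (from $\pi$-periodicity of $h^2$ and $h>0$), which is what makes the action $\iota$-invariant; and $\delta\in(0,\tfrac{\pi}{2})$ guarantees the gaps $2\delta$ and $\pi-2\delta$ both lie in $(0,\pi)$, so all four reflections are genuine and the rotation number is $1/4$.
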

\begin{proof}
Let $h$ be a function satisfying those conditions, and define the curve $\gamma$ by 
\begin{equation}\label{eq:curveAndSupport}
\gamma(\psi)=h(\psi)\begin{pmatrix}
\cos\psi \\
\sin\psi 
\end{pmatrix}
+h'(\psi)\begin{pmatrix}
-\sin\psi \\
\cos\psi
\end{pmatrix}.\end{equation}
It is well known that a curve $\gamma$ and its support function are related by this equation, and hence the support function of $\gamma$ is $h$.
As before, $\rho(\psi)$ denotes that radius of curvature of $\gamma$ at the point $\gamma(\psi)$, and $\rho(\psi)=h(\psi)+h''(\psi)$.
The fact that $h$ is $C^2$ implies that $\rho$ is continuous, and hence the curve $\gamma$ itself is $C^2$.
The condition $h+h''>0$ then means that $\rho > 0$, which implies that $\gamma$ is strictly convex. 
Next, the fact that \[h^2(\psi)=c_0+\sum\limits_{n\in 2+4\Z} c_n e^{in\psi}\] implies that $h^2(\psi+\pi)=h^2(\psi)$.
Since $h$ is positive it also follows that $h(\psi+\pi)=h(\psi)$, and from this and \eqref{eq:curveAndSupport} we get that $\gamma(\psi+\pi)=-\gamma(\psi)$, which means that $\gamma$ is centrally symmetric. 
Next, the Fourier decomposition of $h^2$ also yields the identity  $h^2(\psi)+h^2(\psi+\frac{\pi}{2})=2c_0$, for all $\psi$.
Since $c_0=h^2(0)$ is positive, we can find $R>0$ for which: \[h^2(\psi)+h^2(\psi+\frac{\pi}{2})=R^2.\]
Therefore, we can find a function $d:[0,2\pi]\to[0,\frac{\pi}{2}]$ for which
\begin{equation}\label{eq:definitionOfD}
\begin{cases}
h(\psi)=R\sin d(\psi) , \\
h(\psi+\frac{\pi}{2})=R\cos d(\psi) . \\
\end{cases}
\end{equation}
The range $[0,\frac{\pi}{2}]$ can be chosen for $d$ since $h$ is positive.
We show that this $d(\psi)$ is an invariant curve of $4$-periodic billiard orbits in $\gamma$, namely, that for all $\psi$, if we consider the line that passes through $\gamma(\psi)$ and makes an angle of $d(\psi)$ with the tangent, then this line determines a $4$-periodic billiard orbit.
For that end, it is enough to show that the line passing through $\gamma(\psi)$, making an angle of $d(\psi)$ with the tangent, intersects $\gamma$ at the point $\gamma(\psi+\frac{\pi}{2})$ and makes an angle of $d(\psi+\frac{\pi}{2})$ with the tangent at that point. 
Observe that the exterior normal to $\gamma(\psi)$ makes an angle of $\psi$ with the $x$-axis, and hence the tangents to $\gamma(\psi)$ and $\gamma(\psi+\frac{\pi}{2})$ are necessarily orthogonal. 
Also, it holds that \[d(\psi+\frac{\pi}{2})=\arcsin\frac{h(\psi+\frac{\pi}{2})}{R}=\arcsin\frac{R\cos d(\psi)}{R}=\frac{\pi}{2}-d(\psi).\]
As a result, if the segment from $\gamma(\psi)$ to $\gamma(\psi+\frac{\pi}{2})$ makes an angle of $d(\psi)$ with the tangent at $\gamma(\psi)$ it then also must make an angle of $d(\psi+\frac{\pi}{2})$ with the tangent to $\gamma(\psi+\frac{\pi}{2})$, so it is enough to verify the first claim.
Consequently, we only need to verify that the vector $\gamma(\psi+\frac{\pi}{2})-\gamma(\psi)$ is parallel to the vector \[v=(\cos(\psi+\frac{\pi}{2}+d(\psi)),\sin(\psi+\frac{\pi}{2}+d(\psi)))=(-\sin(\psi+d(\psi)),\cos(\psi+d(\psi))).\]
Therefore we compute:
\begin{gather*}
\gamma(\psi+\frac{\pi}{2})-\gamma(\psi)=h(\psi+\frac{\pi}{2})\begin{pmatrix}
\cos (\psi+\frac{\pi}{2}) \\
\sin (\psi+\frac{\pi}{2})
\end{pmatrix}
+h'(\psi+\frac{\pi}{2})\begin{pmatrix}
-\sin (\psi+\frac{\pi}{2}) \\
\cos (\psi+\frac{\pi}{2})
\end{pmatrix}\\
-h(\psi) \begin{pmatrix}
\cos (\psi) \\
\sin (\psi) 
\end{pmatrix}
-h'(\psi)\begin{pmatrix}
-\sin (\psi) \\
\cos (\psi)
\end{pmatrix}.
\end{gather*}
Consider first the terms with $h(\psi+\frac{\pi}{2})$ and $h(\psi)$.
Since $h(\psi+\frac{\pi}{2})=R\cos d(\psi)$ and $h(\psi)=R\sin d(\psi)$, we have 
\begin{gather*}
u := h(\psi+\frac{\pi}{2})
\begin{pmatrix}
\cos (\psi+\frac{\pi}{2}) \\
\sin (\psi+\frac{\pi}{2})
\end{pmatrix}
 - h(\psi) \begin{pmatrix}
\cos (\psi) \\
\sin (\psi) 
\end{pmatrix}
 = \\
 =R\bigg(\cos d(\psi)\begin{pmatrix}
-\sin (\psi) \\
\cos (\psi)
\end{pmatrix}
 - \sin d(\psi)\begin{pmatrix}
\cos (\psi) \\
\sin (\psi)
\end{pmatrix}\bigg)=\\
=R\begin{pmatrix}
-\sin(\psi)\cos d(\psi)-\sin d(\psi)\cos(\psi) \\
\cos d(\psi)\cos(\psi)-\sin d(\psi)\sin\psi
\end{pmatrix} = R\begin{pmatrix}
-\sin(\psi+d(\psi)) \\
\cos(\psi+d(\psi))
\end{pmatrix}.
\end{gather*}
So we see that the vector $u$ is indeed parallel to the vector $v$.
Now for the terms with $h'(\psi)$, and $h'(\psi+\frac{\pi}{2})$.
\begin{gather*}
w := h'(\psi+\frac{\pi}{2})\begin{pmatrix}
-\sin (\psi+\frac{\pi}{2}) \\
\cos (\psi+\frac{\pi}{2})
\end{pmatrix} - h'(\psi)\begin{pmatrix}
-\sin (\psi) \\
\cos (\psi)
\end{pmatrix} =\\
= h'(\psi+\frac{\pi}{2})\begin{pmatrix}
-\cos(\psi) \\
-\sin(\psi) 
\end{pmatrix}-h'(\psi)\begin{pmatrix}
-\sin(\psi) \\
\cos(\psi)
\end{pmatrix}=\\
=-h'(\psi+\frac{\pi}{2})\begin{pmatrix}
\cos(\psi) \\
\sin(\psi)
\end{pmatrix} - h'(\psi)\begin{pmatrix}
-\sin(\psi) \\
\cos(\psi)
\end{pmatrix}.
\end{gather*}
Since $h^2(\psi)+h^2(\psi+\frac{\pi}{2})$ is constant, it follows that \[h(\psi)h'(\psi)+h(\psi+\frac{\pi}{2})h'(\psi+\frac{\pi}{2})=0,\] and hence that $h'(\psi+\frac{\pi}{2})=-\frac{h(\psi)}{h(\psi+\frac{\pi}{2})}h'(\psi)=-\tan d(\psi) h'(\psi)$.
We use it in the above formula for $w$:
\begin{gather*}
w=h'(\psi)\tan d(\psi)\begin{pmatrix}
\cos(\psi) \\
\sin(\psi)
\end{pmatrix}-h'(\psi)\begin{pmatrix}
-\sin(\psi) \\
\cos(\psi)
\end{pmatrix}
=\\
=\frac{h'(\psi)}{\cos d(\psi)}\begin{pmatrix}
\sin d(\psi)\cos(\psi)+\cos d(\psi) \sin(\psi) \\
\sin d(\psi) \sin (\psi) - \cos d(\psi)\cos(\psi) 
\end{pmatrix} =\\
= \frac{h'(\psi)}{\cos d(\psi)}\begin{pmatrix}
\sin(\psi+d(\psi)) \\
-\cos(\psi+d(\psi))
\end{pmatrix} = -\frac{h'(\psi)}{\cos d(\psi)} v.
\end{gather*}
So the vector $w$ is also parallel to the vector $v$, and hence $\gamma(\psi+\frac{\pi}{2})-\gamma(\psi)=u+w$ is also parallel to $v$, finishing the proof.
\end{proof}

Now we are in position to prove Theorem \ref{thm:estimate-beta}.
\begin{proof}[Proof of Theorem \ref{thm:estimate-beta}]
The beginning of this proof is identical to the first paragraph of the proof of Theorem \ref{thm:effectiveGeneralCurve}.
We can then start from equality \eqref{eq:omega1}, and continue as in \cite[Section 5]{bialy2020birkhoffporitsky}, to get an analogue of the inequality (18) in \cite{bialy2020birkhoffporitsky}.
For completeness, we shall reconstruct the process.
Let $(\varphi_0,p_0)$ be the coordinates of an oriented line which is a part of an m-orbit.
Then \eqref{eq:omega1} implies that there exist functions $\omega$ and $\nu_1$ such that
\[	\begin{cases}
	\omega(T(\varphi_0,p_0))=S_{22}(\varphi_0,\varphi_1)+S_{12}(\varphi_0,\varphi_1)\nu_1(\varphi_0,p_0)^{-1},\\
	\omega(\varphi_0,p_0)=-S_{11}(\varphi_0, \varphi_1)-S_{12}(\varphi_0,\varphi_1)\nu_{1}(\varphi_0,p_0).
	\end{cases}\]
	where $T$ is the billiard map, and $(\varphi_1,p_1)$ are the coordinates of the line $T(\varphi_0,p_0)$.
	Since $p$ is the momentum coordinate, then \[p_0=-S_1(\varphi_0,\varphi_1)=h(\psi)\cos\delta-h'(\psi)\sin\delta\] and \[p_1=S_2(\varphi_0,\varphi_1)=h(\psi)\cos\delta+h'(\psi)\sin\delta,\] where $\psi=\frac{\varphi_0+\varphi_1}{2}$ and $\delta=\frac{\varphi_1-\varphi_0}{2}$.
	We multiply the first equation by $p_1^2$, and the second by $p_0^2$, and subtract
	\begin{gather*}
	p_1^2\omega(T(\varphi_0,p_0))-p_0^2\omega(\varphi_0,p_0)= \\
	=p_0^2S_{11}(\varphi_0,\varphi_1)+p_1^2 S_{22}(\varphi_0,\varphi_1)+S_{12}\Big(p_0^2\nu_1(\varphi_0,p_0)+p_1^2\nu_1(\varphi_0,p_0)^{-1}\Big) \geq \\
	\geq p_0^2 S_{11}(\varphi_0,\varphi_1)+p_1^2S_{22}(\varphi_0,\varphi_1)+2p_0p_1S_{12}(\varphi_0,\varphi_1).
	\end{gather*}
	We used the fact that $\nu_1$ and $S_{12}$ are positive.
	Now integrate both sides of this inequality on the invariant set $\mathcal{M}\cap\mathcal{A}$, with respect  to the invariant measure \[d\mu=\frac{1}{4}(h(\psi)+h''(\psi))\sin\delta d\delta d\psi.\]
	Since this set is invariant under $T$, and $T$ is measure preserving, the integral in the left hand side vanishes.
	We are left with
	\[\int_{\mathcal{M}\cap \mathcal{A}} \Big(p_0^2 S_{11}(\varphi_0,\varphi_1)+p_1^2 S_{22}(\varphi_0,\varphi_1)+2p_0p_1 S_{12}(\varphi_0,\varphi_1)\Big)d\mu \leq 0.\]
	So now we substitute the expressions for $p_0$ and $p_1$ and the second order derivatives of $S$ computed above \eqref{eq:secondOrderS}.
Then after simplification, we get the following inequality
 
\begin{equation}\label{eq:integralInequalityExp}
0\geq\int\limits_{\mathcal{M}\cap\mathcal{A}}\Big[\cos^2\delta\sin\delta\Big(h''h^2+3h(h')^2\Big)-h(h')^2\sin\delta\Big]d\mu.
\end{equation}
Call the first summand of the integrand $A$, and the second one $B$. 
Then inequality \eqref{eq:integralInequalityExp} reads:
\[\int\limits_{\mathcal{M}\cap\mathcal{A}} Ad\mu \leq \int\limits_{\mathcal{M}\cap\mathcal{A}}B d\mu.\]
The function $B=h(h')^2\sin\delta$ is non-negative, so \[\int\limits_{\mathcal{M}\cap\mathcal{A}}Bd\mu\leq\int\limits_{\mathcal{A}}Bd\mu.\]
In the left hand side, write \[\int\limits_{\mathcal{M}\cap \mathcal{A}} Ad\mu = \int\limits_{\mathcal{A}} Ad\mu-\int\limits_{\Delta} Ad\mu,\] and then one gets
\begin{equation}\label{eq:integralInequalityShort}
\int\limits_{\mathcal{A}}(A-B)d\mu\leq\int\limits_{\Delta} Ad\mu.
\end{equation}
Left hand side of the last inequality is exactly the right hand side of the inequality (18) in \cite{bialy2020birkhoffporitsky}, but multiplied by $\frac{1}{4}$ (this constant is omitted in \cite{bialy2020birkhoffporitsky}, since the integral is compared to zero). 
Hence it can be simplified by Lemma 5.1 of \cite{bialy2020birkhoffporitsky}:
\[\int\limits_{\mathcal{A}} (A-B) d\mu=\frac{\pi R^4}{1024}\int\limits_0^\pi(\mu'')^2-4(\mu')^2 d\psi ,\]
where $\mu(\psi)=\cos(2d(\psi))$.
Now we bound this integral from below, and bound $\int\limits_\Delta Ad\mu$ from above. 
It holds that \[\mu(\psi)=\cos(2d(\psi))=1-2\sin^2 d(\psi)=1-2\frac{h^2(\psi)}{R^2}.\]
As a result, $\mu'(\psi)=-\frac{2}{R^2}(h^2)'$, and $\mu''(\psi)=-\frac{2}{R^2}(h^2)''$.
Thus, we have \[\int\limits_0^\pi(\mu'')^2-4(\mu')^2d\psi=\frac{4}{R^4}\int\limits_0^\pi\Big((h^2)''\Big)^2-4\Big((h^2)'\Big)^2d\psi.\]
Since $\gamma$ is a curve in $\mathcal{C}$, the Fourier expansion of $h^2$ is as in equation \eqref{eq:classOfCurves}.
Now use Parseval's identity in $L^2[0,\pi]$:
\begin{gather*}
\int\limits_0^\pi \Big((h^2)''\Big)^2-4\Big((h^2)'\Big)^2d\psi=\pi\sum\limits_{n\in 2+4\Z} (n^4-4n^2)|c_n|^2=\\
=\pi\sum\limits_{\tiny{\begin{matrix}
n\in 2+4\Z \\
|n|>2
\end{matrix}}} (n^4-4n^2)|c_n|^2
\geq\pi\sum\limits_{\tiny{\begin{matrix}
n\in 2+4\Z \\
|n|>2
\end{matrix}}} 1000|c_n|^2,
\end{gather*}
where we used the fact that for $|n|\geq 6$, $n^4-4n^2\geq 1000$. 
Now use Parseval's identity again
\begin{multline*}
\int\limits_0^\pi \Big((h^2)''\Big)^2-4\Big((h^2)'\Big)^2d\psi\geq 1000\int\limits_0^\pi |h^2(\psi)-c_0-c_2e^{2i\psi}-c_{-2}e^{-2i\psi}|^2d\psi\geq \\
\geq 1000\pi d^2(h^2,\mathrm{span}\set{1,\cos(2\psi),\sin(2\psi)}),
\end{multline*}
where $d$ denotes the distance in the $L^2$ norm between the function $h^2$ and a subspace of $L^2[0,\pi]$.
Denote the subspace of $L^2[0,\pi]$ spanned by $\set{1,\cos(2\psi),\sin(2\psi)}$ by $U$. 
Then we get that \[\int\limits_0^\pi(\mu'')^2-4(\mu')^2d\psi\geq\frac{4000\pi}{R^4} d^2(h^2,U).\]
As a result, we get the following lower bound:
\begin{equation}\label{eq:lowerBoundL2}
\int\limits_{\mathcal{A}} A-B d\mu\geq\frac{\pi R^4}{1024}\cdot \frac{4000\pi}{R^4} d^2(h^2,U)=\frac{125\pi^2}{32}d^2(h^2,U).
\end{equation}
Now we turn to finding an upper bound for \[\int\limits_{\Delta} Ad\mu.\]
If $N$ is an upper bound on $A$, then \[\int\limits_{\Delta} Ad\mu\leq N\mu(\Delta),\] so it is enough to find an upper bound for $A$. We have
\[|A|=|\sin\delta\cos^2\delta (h''h^2+3h(h')^2)|\leq |h''|h^2+3h(h')^2.\]Since $\gamma$ is centrally symmetric, $h(\psi)$ is half the width in the direction $\psi$ (recall that in this case $h$ is the support function with respect to the center of symmetry). 
The maximal width is in the direction of the diameter, so applying Blaschke rolling disc theorem again, we get 
$$
h\leq \frac{D}{2}\leq\frac{1}{\beta},
$$where $\beta$ is the minimal curvature of $\gamma$.
Next, since $h+h''=\rho$, then \[|h''|\leq\rho+h\leq\frac{1}{\beta}+\frac{D}{2}\leq \frac{2}{\beta}.\] 

Since $|{\gamma(\psi)}|^2=h(\psi)^2+h'(\psi)^2$, then
 \[h'(\psi)^2\leq \Big(\frac{D}{2}\Big)^2\leq\frac{1}{\beta^2} .\]
Now we put everything together
$$
|A|\leq \frac{5}{\beta^3}.
$$

Thus we get
\begin{equation}\label{eq:boundOnIntA}
	\int\limits_{\Delta} Ad\mu\leq \frac{5}{\beta^3}\mu(\Delta).
\end{equation}
Finally we put together inequalities \eqref{eq:integralInequalityShort}, \eqref{eq:lowerBoundL2} and \eqref{eq:boundOnIntA}, and get
\[\frac{5}{\beta^3}\mu(\Delta)\geq \frac{125\pi^2} {32}d^2(h^2,U) .\]
And hence
\[\mu(\Delta)\geq \frac{25\pi^2} {32}{\beta^3}d^2(h^2,U) ,\]
which is the required inequality.

This bound is sharp for ellipses.
If $\mu(\Delta)=0$, then (since right hand side is also non-negative), it follows that $d(h^2,U)=0$, so $h^2\in\textrm{span}\set{1,\cos(2\psi),\sin(2\psi)}$.
It then follows that $h$ is the support function of an ellipse, which shows that $\mu(\Delta)=0$ can happen only for ellipses. 
\end{proof}
It should be noted that if $h^2(\psi)=c_0+\sum\limits_{n\in 2+4\Z} c_n e^{in\psi}$, then the minimal distance of $h^2$ from the subspace $U$ is realized at the ellipse for which the support function $\tilde{h}$ satisfies $\tilde{h}^2(\psi)=c_0+c_2 e^{2i\psi}+c_{-2}e^{-2i\psi}$.
In that sense, this estimate relates the measure of $\Delta$ and the $L^2$ distance between the square of the support function of $\gamma$, and the square of the support function of the ellipse that ``best approximates $\gamma$".

\bibliography{bibliography}
\bibliographystyle{abbrv}

\end{document}